\documentclass[10pt,twoside,final]{amsart}

\usepackage[english]{babel}
\usepackage{graphicx,epstopdf,epsfig}
\usepackage{amsfonts,epsfig,fancyhdr,graphics,amsmath,amssymb}

\title{Conjugacy classes with covering number two in hyperbolic orthogonal groups }

\newtheorem{theorem}{Theorem}[section]

\newtheorem{lemma}[theorem]{Lemma}
\newtheorem{corollary}[theorem]{Corollary}
\newtheorem{remark}[theorem]{Remark}
\newtheorem{example}[theorem]{Example}

\newcommand {\ch }{\operatorname{char}}
\newcommand {\SpG }{\operatorname{Sp}}
\newcommand {\PSp }{\operatorname{PSp}}
\newcommand {\GL }{\mathrm{GL}}
\newcommand {\SL }{\mathrm{SL}}
\newcommand {\PSL }{\mathrm{PSL}}
\newcommand {\PSU }{\mathrm{PSU}}

\newcommand {\rank }{\operatorname{rank}}

\newcommand {\GF }{\mathrm{GF}}

\newcommand {\Idm }{\mathrm{I}}
\newcommand {\Dickson }{\mathrm{D}}

\newcommand {\PC }{\mathrm{PC}}
\newcommand {\SC }{\mathrm{SC}}
\newcommand {\CC }{\mathrm{CC}}

\newcommand {\OG }{\mathrm{O}}
\newcommand {\pOG }{\mathrm{O}^+}
\newcommand {\SOG }{\mathrm{SO}}
\newcommand {\pSOG }{\mathrm{SO}^+}
\newcommand {\pOmega }{\Omega^+}

\newcommand {\M }{\mathrm{M}}

\usepackage{ifdraft}
\usepackage{listlbls}
\ifdraft{\usepackage[outer]{showlabels}
	\usepackage{todonotes}}{}
	\ifdraft{\usepackage[outer]{showlabels}
		\usepackage{todonotes} \usepackage{datetime}}{}

\usepackage[pagebackref,colorlinks,citecolor=red,urlcolor=blue,linkcolor=green, bookmarks=false,hypertexnames=true]{hyperref}
\usepackage{orcidlink}
\usepackage{fancyhdr}
\usepackage{verbatim}

\begin{document}

\bibliographystyle{plain}

\setcounter{page}{1}

\thispagestyle{empty}

\keywords{orthogonal group, conjugacy classes, Thompson conjecture}
\subjclass{15A15, 15F10}

\author{Klaus Nielsen}\,\orcidlink{0009-0002-7676-2944}
\email{klaus@nielsen-kiel.de}

\ifdraft{\today \ \currenttime}{}
\pagestyle{fancy}
\fancyhf{}
\fancyhead[OC]{Klaus Nielsen}
\fancyhead[EC]{ Products of conjugacy classes in orthogonal groups}
\fancyhead[OR]{\thepage}
\fancyhead[EL]{\thepage}

\maketitle

\begin{abstract}
There is a conjecture, usually attributed to J. G. Thompson, that a finite simple nonabelian group $G$ has a conjugacy class $\Psi$ of covering number 2; i.e $G = \Psi^2$.
We show that the commutator subgroup $\Omega(V,Q)$ of the orthogonal group $\OG(V,Q)$ of a hyperbolic quadratic vector  space $(V, Q)$ over a field $K$
has a conjugacy class $\Psi$ such that $\Omega(V,Q) = \Psi^2 \cup -\Psi^2$.
\end{abstract}


\section{Introduction} \label{intro-sec}

 Thompson's conjecture has been verified for several groups, e.g. for the alternating  and the sporadic groups. Ellers and  Gordeev \cite{EllersGordeev-1998} proved the Thompson conjecture for all simple groups of Lie type over a field with more than 8 elements.
 For references, see \cite{Lev-1999} or \cite{EllersGordeev-1998}.  
Furthermore, it is known that the following simple groups have a conjugacy class of covering number 2:
\begin{enumerate}
    \item $\PSL(n, K)$  (Lev \cite[Theorem 5]{Lev-1994} for $|K| \ge 4$,
     and  \cite[Theorem 3]{Lev-1999}) for all fields $K$, 
    \item $\PSU(V, K, h)$ if $|K| > 9$ and $h$ is hyperbolic (Bünger \cite[Satz 4.1.19]{Bunger-1997}).
    \end{enumerate}
 
Recently, Larsen and Tiep \cite{LarsenTiep-2025} showed that Thompson's conjecture holds for almost all finite simple groups. So there seems to be still some interest in Thompson's conjecture.

In \cite{KNielsen-2025b} and \cite{KNielsen-2025c}, we have shown that the symplectic groups $\PSp(2n, K)$ and the hyperbolic unitary groups $\PSU(V, K, h)$ ($\dim V \ge 4$) have  a conjugacy class of covering number 2.

In this note, we consider the orthogonal group $\OG(V,Q)$ of a hyperbolic quadratic vector  space $(V, Q)$ over a field $K$. 

We call a transformation   $\varphi  \in \OG(V, Q)$ (and its conjugacy class)   hyperbolic if $\varphi$ if there exists a decomposition $V = U \oplus W$, where $U$ and $W$ are $\varphi$-invariant and totally isotropic ($Q(U) = Q(W) = 0$).

We say that  $\varphi$ (and its conjugacy class)  is strictly hyperbolic if $\varphi|_U$  and $\varphi_W$ are coprimary. The minimal polynomial of a strictly hyperbolic transformation is of the form $\mu = q(x) q^*(x)$, where  $q(x)$ and its reciprocal $q^*(x):= x^{\partial q}q(x^{-1})$ are coprime; cf.  Huppert \cite[1.7 Satz]{Huppert-1980a}. We show

\begin{theorem} \label{theorem-1}
	Let $\dim V \ge 4$.  Let $\Psi_1, \Psi_2$ be  conjugacy classes of $\Omega(V, Q)$. Assume  that $\Psi_1 =  \Idm_2 \perp  \Phi_1$, $\Psi_2 = \Idm_2 \perp \Phi_2$, where  $\Phi_1$ and $\Phi_2$ both are strictly hyperbolic and cyclic.
	Then $\Psi_1 \Psi_2$  contains all nonscalar elements of
	$\Omega(V, Q)$.
\end{theorem}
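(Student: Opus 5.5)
Given a nonscalar $\pi \in \Omega(V,Q)$, the goal is to write $\pi = \psi_1\psi_2$ with $\psi_i \in \Psi_i$. Equivalently, we find an element $\psi_1^{-1}\pi$ conjugate (in $\Omega(V,Q)$) to a member of $\Psi_2$. Since $\dim V = 2n \ge 4$, the plan is an induction-free, structural argument in the spirit of the author's symplectic and unitary papers \cite{KNielsen-2025b,KNielsen-2025c}.

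\textbf{Step 1: a good decomposition of $V$.} First we exploit that $\pi$ is nonscalar to find a hyperbolic plane $H$ and a decomposition $V = H \perp H^\perp$ adapted to $\pi$. More precisely, we look for a vector $v$ with $v$ and $\pi v$ spanning a suitable subspace, and choose a hyperbolic pair $e,f$ so that $\pi$ is "cyclic enough" relative to a totally isotropic pair of complementary subspaces $U=\langle e_1,\dots,e_{n}\rangle$, $W=\langle f_1,\dots,f_n\rangle$. The aim is to bring $\pi$, via conjugation in $\Omega$, into a block form with respect to a hyperbolic basis in which the "$U\to U$" block is in companion (Frobenius) form with prescribed free entries. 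This is the standard normal-form device behind Sourour/Lev-type results: a nonscalar matrix is similar to one with arbitrary prescribed diagonal entries, and in particular to a companion-like block.

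\textbf{Step 2: the hyperbolic factors.} Next we choose $\psi_1 = \Idm_2 \perp \varphi_1$, where $\varphi_1$ acts on $U'\oplus W'$ (totally isotropic, invariant) as $\mathrm{diag}(A, A^{*-1})$ with $A$ cyclic with characteristic polynomial $q_1$, and $q_1, q_1^*$ are coprime. By Huppert's result, the class of a strictly hyperbolic cyclic element is determined by $q_1$. This means we only have to produce, inside $\psi_1^{-1}\pi$, an element whose fixed space contains a hyperbolic plane and whose restriction to its complement is cyclic with minimal polynomial $q_2q_2^*$, preserving the two isotropic halves. We set this up by solving for a block-triangular product. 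If $\pi$ is written in the hyperbolic basis as $\begin{pmatrix} P & R\\ S & T\end{pmatrix}$, we choose $\psi_1$ of the form (lower unitriangular)$\cdot$(block diagonal) so that $\psi_1^{-1}\pi$ becomes block upper triangular with diagonal blocks $B, B^{*-1}$ and $B$ similar to $\mathrm{diag}(1, C_{q_2})$ up to the $\Idm_2$ part. The free parameters of the companion form from Step 1 are what allow the characteristic polynomial of $B$ to be made equal to $(x-1)q_2$ while $\psi_1$ stays in $\Psi_1$.

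\textbf{Step 3: conjugacy, the spinor norm, and degenerate cases.} We then verify two things. First, the block upper triangular element with coprime diagonal blocks is actually conjugate to $\Idm_2\perp\varphi_2$; coprimality of $q_2$ and $q_2^*$ and of these with $x-1$ lets us split off the off-diagonal block. Second, this conjugacy can be achieved in $\Omega(V,Q)$ rather than merely in $\OG(V,Q)$. For the second point, the centralizer of a hyperbolic element contains $\GL(U)$-type elements with all spinor norms and determinants, so $\OG$-classes and $\Omega$-classes of such elements coincide.

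\textbf{Main obstacle.} The hardest step is Step 1 for $\pi$ that are "almost scalar", for example $\pi = \pm(\Idm + \text{small})$, involutions, or elements with a large eigenspace. For these there may be no single good vector, and the cases $\dim V = 4$ and small fields ($|K|=2,3$) need separate handling. First I would try an explicit treatment of $\pi$ with $\rank(\pi - \lambda) \le 2$ using Eichler/Siegel transformations in $4$-dimensional hyperbolic subspaces. I would also have to check that the $\Idm_2$ part can absorb the remaining degrees of freedom there.
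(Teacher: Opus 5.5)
\textbf{There is a genuine gap.} Your Step~1 is never carried out, and Step~2 silently depends on a condition that you never identify.

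Take $\psi_1$ block lower triangular with diagonal blocks $A_1, A_1^+$, as you propose. Then the upper-left block of $\psi_1^{-1}\pi$ is $A_1^{-1}\PC(\pi)$. So Step~2 amounts to factoring the principal corner as $\PC(\pi)=A_1B$, with $A_1,B$ cyclic of characteristic polynomials $(x-1)q_1$ and $(x-1)q_2$. This requires three things of $\PC(\pi)$:
\begin{itemize}
\item it must be nonsingular, otherwise the lower-left block of $\psi_1^{-1}\pi$ cannot be cleared;
\item it must be nonscalar;
\item its determinant must equal \emph{exactly} $\det A_1\det B$, a constant fixed by $q_1,q_2$.
\end{itemize}
The determinant of the corner is neither a conjugacy invariant nor a free parameter. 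By Lemma~\ref{OGHYP1}, $\det\PC(\pi)\equiv\Theta(\pi)$ modulo squares, so only one square class is available. You must then show that every value in that class is attained by a conjugate whose corner is still nonscalar. Your proposal never mentions this constraint. So the claim that the companion-form parameters let you reach $(x-1)q_2$ ``while $\psi_1$ stays in $\Psi_1$'' is unsupported. Step~1 also gives no mechanism for reaching a companion-type corner by conjugation inside $\OG(V,Q)$.

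\textbf{How the paper fills this hole.} All the orthogonal-specific work is in controlling the corner.
\begin{itemize}
\item The double-coset decomposition (Lemma~\ref{OGHYP3}) gives $\nu(\PC(\pi))\equiv\Dickson(\pi)\equiv 0 \bmod 2$. Hence conjugating by a lower unitriangular matrix with alternating off-diagonal block $X$ makes the corner $A+BX$ nonsingular.
\item A further conjugation makes the congruence corner nonzero.
\item Alternating perturbations then multiply the corner's determinant by an arbitrary $\delta^2$ while keeping it nonscalar (Lemmas~\ref{OGHYP6}, \ref{OGHYP9}).
\end{itemize}
The paper then invokes a $\GL(n,K)$ theorem \cite[Theorem 1.2]{BN-1999}: a nonscalar matrix with compatible determinant is a product $A=RS$ of cyclic matrices with prescribed characteristic polynomials. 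The Schur complement identity $\SC=A^+=R^+S^+$ turns this into the lower/upper block-triangular factorization, which is precisely your Step~2.

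This route also makes your ``main obstacle'' disappear. One never needs $\pi$ itself to be ``cyclic enough'', only its $n\times n$ corner to be nonscalar. So involutions and other near-scalar elements need no separate case analysis. Your appeal to ``free parameters of a companion form'' would in effect require you to reprove that cited factorization theorem inside the orthogonal group.

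\textbf{A smaller error in Step~3.} The diagonal blocks $B$ and $B^{*-1}$ are not coprime: both have the eigenvalue $1$. The generalized $1$-eigenspace of the factor is a nondegenerate plane on which it acts unipotently, and in characteristic $2$ this action could be a reflection. To conclude that this part is $\Idm_2$ on a hyperbolic plane, you need two further facts. First, Dickson invariant $0$, which is automatic for block-triangular factors with invertible diagonal blocks by Lemma~\ref{OGHYP1}. Second, Witt cancellation.
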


\begin{corollary} \label{cor-1}
	Let $\dim V \ge 4$. Let $\dim V \ge 6$ if $K = \GF(5)$. If $|K| \le 3$  let $\dim V \ge 8$. Then $\Omega(V, Q) = \Psi^2 \cup -\Idm_{2n}$ for some  conjugacy class  $\Psi$ of $\Omega(V, Q)$.
	 \end{corollary}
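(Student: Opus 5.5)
The plan is to derive the corollary from Theorem~\ref{theorem-1} by taking $\Psi_1=\Psi_2=\Psi$ and $\dim V = 2n$. The theorem then gives that $\Psi^2$ contains every nonscalar element of $\Omega(V,Q)$. The only scalars in $\OG(V,Q)$ are $\pm\Idm_{2n}$, so what remains is to arrange that $\Idm_{2n}\in\Psi^2$. The element $-\Idm_{2n}$ may or may not lie in $\Omega(V,Q)$ or in $\Psi^2$, which is why it is listed separately in the statement. Thus the whole task reduces to one existence problem: find a strictly hyperbolic, cyclic $\Phi$ on a hyperbolic space of dimension $2n-2$ such that $\Psi=\Idm_2\perp\Phi$ lies in $\Omega(V,Q)$ and satisfies $\Psi=\Psi^{-1}$. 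Then $\Idm=\psi\psi^{-1}\in\Psi^2$.

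To build $\Phi$, decompose the $(2n-2)$-dimensional hyperbolic space as $U\oplus W$ with $U,W$ totally isotropic and in duality. Let $\Phi$ act on $U$ by the companion matrix of a polynomial $q$ of degree $n-1$, and on $W$ by the contragredient action, which has characteristic polynomial $q^*$. The conditions on $q$ are as follows.
\begin{enumerate}
\item $q$ and $q^*$ are coprime, and $q$ is coprime to $x-1$ as well. This makes $\Phi$ strictly hyperbolic and cyclic, with minimal polynomial $qq^*$.
\item $\Phi$ is conjugate to $\Phi^{-1}$ inside $\Omega$. Since $\Phi^{-1}$ has minimal polynomial $q^{-1\,*}$-type, i.e.\ $(qq^*)^*=qq^*$, it is conjugate to $\Phi$ in $\OG$, because cyclic hyperbolic elements are determined by their characteristic polynomial (Huppert). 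We must then check that conjugacy descends to $\Omega$. Having the $\Idm_2$ summand helps here: its centralizer contains $\OG$ of a hyperbolic plane, which supplies elements of every spinor norm and of determinant $-1$. So the $\OG$-class of $\Psi$ coincides with its $\Omega$-class.
\item $\Psi\in\Omega$. A hyperbolic element $\mathrm{diag}(A,A^{-*})$ has spinor norm $\det A = \pm q(0)$ modulo squares and has Dickson invariant $0$. So we need $(-1)^{n-1}q(0)$ to be a square.
\end{enumerate}

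The field conditions in the corollary correspond to when such a $q$ exists. One option is $q=(x-a)^{n-1}$ with $a\ne \pm1$ and $(-1)^{n-1}a^{n-1}$ a square; another is an irreducible $q$ chosen with $q\ne q^*$. When $|K|\ge 4$ there is some $a\notin\{0,\pm1\}$. Over $\GF(5)$ we have $a\in\{2,3\}$, which are nonsquares, so the spinor-norm constraint forces extra room, hence $\dim V\ge 6$. For $|K|\le 3$ there is no such $a$, and we need an irreducible $q$ of degree at least $2$ with $q\ne q^*$. The first such polynomials (e.g.\ over $\GF(2)$, $x^3+x+1$) appear in degree $3$, which forces $n-1\ge 3$, i.e.\ $\dim V\ge 8$.

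The main obstacle is making this case analysis uniform. In each allowed case we must exhibit $q$ with $q(0)$ in the correct square class and then verify that the $\OG$-conjugacy of $\Psi$ and $\Psi^{-1}$ upgrades to $\Omega$-conjugacy. I would first try the centralizer argument via the $\Idm_2$ summand, and fall back on explicit choices of $q$ for the small fields.
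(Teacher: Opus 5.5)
Your strategy is the paper's. Lemma~\ref{OGHYP10} takes $\Psi$ to be the class of $\mathrm{diag}(W,W^+)$ with $W$ cyclic of minimal polynomial $(x-1)q$ and $\det W$ a square, so that it lies in $\Omega$ by Lemma~\ref{OGHYP1}. It then uses the fixed hyperbolic plane to see that $\Psi$ is a full $\OG$-class, hence $\Psi=\Psi^{-1}$ and $\Idm_{2n}\in\Psi^2$, and applies Theorem~\ref{theorem-1} with $\Psi_1=\Psi_2=\Psi$. These structural steps are sound in your proposal. What is missing is the step you yourself call the main obstacle: you never produce an admissible $q$ of degree $n-1$ for every $(K,n)$ allowed by the hypotheses. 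That existence question is the whole content behind the dimension restrictions in the corollary, and parts of your threshold reasoning are also inaccurate.

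Concretely:

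(i) For $q=(x-a)^{n-1}$ one has $\det C_q=(-1)^{n-1}q(0)=a^{n-1}$, not $(-1)^{n-1}a^{n-1}$. The uniform choice for $|K|\ge 4$ is therefore $a=\lambda^2$ with $\lambda^2\ne 0,\pm1$. Such $\lambda$ exists for every field with $|K|\ge4$ except $\GF(5)$; this is the paper's first case.

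(ii) Over $\GF(5)$, $q=(x-2)^{n-1}$ only works for odd $n$. For even $n\ge4$ you need a higher-degree factor with square determinant, e.g.\ $q=(x-2)^{n-4}(x^3+x-1)$ as in the paper (there $x^2+x-1=(x-2)^2$). Without this, $\dim V\equiv 0 \pmod 4$ is not covered.

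(iii) Over $\GF(3)$ your explanation is wrong. The polynomial $x^2+x-1$ is irreducible with $q\ne q^*$, so degree-$2$ candidates exist. What rules out $n=3$ is that both admissible quadratics $x^2\pm x-1$ have determinant $-1$, a nonsquare.

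(iv) For $|K|\le 3$ you need $q$ of every degree $n-1\ge 3$, not just one cubic. Combine $(x^3+x+1)^t$, resp.\ $(x^3+x^2-1)^t$, with one extra factor of degree $4$ or $5$, keeping $\gcd(q,q^*)=1$ and, over $\GF(3)$, a square determinant.
\begin{itemize}
\item Over $\GF(2)$, take $x^4+x+1$ or $x^5+x^2+1$. The paper's $x^4+x^2+1=(x^2+x+1)^2$ is self-reciprocal and does not work.
\item Over $\GF(3)$, take $(x^2+x-1)^2$ or $(x^3-x+1)(x^2+x-1)$, both of determinant $1$. The paper's sixth case has the wrong degree.
\end{itemize}
With these explicit choices your argument is complete.
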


\section{Preliminaries.}

In this section, $Q$ may have aritrary Witt index.

\subsection{Dickson invariant.} 
The Dickson invariant  $\Dickson(\varphi)$ of the orthogonal
transformation $\varphi \in \OG(V, Q)$ is the rank of $\varphi -1 \mod 2$. The orthogonal transformations with  Dickson invariant 0 form a subgroup of $\OG(V, Q)$, the special orthogonal group $\SOG(V, Q)$; cf. \cite[11.43 Theorem]{Taylor}.

\subsection{Wall's Spinor norm.}

	Let $\varphi \in \OG(V, Q)$.   Then 
	 $\omega_{\varphi}(u -u\varphi, w -w\varphi) := f_Q(u, w - w\varphi)$
	 defines a nondegenerate bilinear form on the path (or residual space) $V(\varphi -1)$ of $\varphi$, the Wall form of $\varphi$. 
	 The spinor norm $\Theta_W(\varphi)$ of
	 $\varphi$ is defined as the discriminant of $\omega_{\varphi}$.
	 Let $K^* = K-\{0\}$.
	 
	 \begin{lemma}                             \label{SPINOR1}
	 	$\Theta_W(\varphi): \OG(V, Q) \rightarrow K^*/(K^*)^2$ is a homomorphism.
	 \end{lemma}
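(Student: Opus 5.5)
We need to prove that Wall's spinor norm $\Theta_W$ is a homomorphism $\OG(V,Q)\to K^*/(K^*)^2$. The plan is the standard route via Wall's theory of the residual form: show that for $\varphi,\psi\in\OG(V,Q)$ the Wall form of $\varphi\psi$ is related to an orthogonal sum of the Wall forms of $\varphi$ and $\psi$ "up to hyperbolic pieces", so that discriminants multiply modulo squares.

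\emph{Step 1: the nondegenerate form and its basic identity.} First record the properties of $\omega_\varphi$. It is well defined and nondegenerate on $[\varphi]:=V(\varphi-1)$, and it satisfies
\[
\omega_\varphi(x,y)+\omega_\varphi(y,x)=f_Q(x,y)\quad\text{and}\quad \omega_\varphi(x,x)=Q(x)\quad (x,y\in[\varphi]).
\]
Moreover, a transformation is determined by the pair $([\varphi],\omega_\varphi)$: every pair $(W,\omega)$ with $W\le V$, $\omega$ nondegenerate on $W$, and $\omega(x,x)=Q(x)$ arises from a unique $\varphi$. This is Wall's parametrisation, and it follows by writing $\varphi=1-\theta$ with $u\theta$ defined through $\omega(u\theta,\cdot)=f_Q(u,\cdot)|_W$.

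\emph{Step 2: reduce to products with a reflection.} $\OG(V,Q)$ is generated by reflections (Cartan–Dieudonné, with the usual exceptions; in the exceptional hyperbolic characteristic-2 cases over $\GF(2)$ one adds Siegel/transvection-type generators). It therefore suffices to prove
\[
\Theta_W(\varphi\sigma_a)=\Theta_W(\varphi)\,Q(a)
\]
for every reflection $\sigma_a$ with $Q(a)\neq0$, since $\Theta_W(\sigma_a)=Q(a)$: the path of $\sigma_a$ is $Ka$, and $\omega(a,a)=Q(a)$ there. Induction on the length of a reflection word then gives $\Theta_W(\varphi\psi)=\Theta_W(\varphi)\Theta_W(\psi)$.

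\emph{Step 3 (main obstacle): the case analysis for $\varphi\sigma_a$.} Set $W=[\varphi]$. There are two cases.

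If $a\notin W$, then $[\varphi\sigma_a]=W\oplus Ka$. In a basis adapted to this decomposition, the Gram matrix of $\omega_{\varphi\sigma_a}$ is block triangular with diagonal blocks $\omega_\varphi$ and $Q(a)$. I would verify this directly from the definition using $x\sigma_a=x-f_Q(x,a)Q(a)^{-1}a$. The determinant is then $\Theta_W(\varphi)Q(a)$.

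If $a\in W$, then $[\varphi\sigma_a]$ is a hyperplane of $W$, namely the $\omega_\varphi$-orthogonal-type complement $\{x\in W:\omega_\varphi(a,x)=0\}$ (or its left analogue). One must show that $\omega_{\varphi\sigma_a}$ is the restriction of $\omega_\varphi$ there, and compute the discriminant of that restriction. The tool here is the adjugate: for a nondegenerate form $\omega$ on $W$ and a vector $a$, the determinant of $\omega$ restricted to $a^{\perp_\omega}$ equals $\det\omega\cdot\omega^{-1}(a,a)$. Here $\omega^{-1}(a,a)$ should equal $Q(a)^{-1}$, via the identity $\omega(x,x)=Q(x)$ applied to the vector $b$ with $\omega(b,\cdot)=\omega(\cdot,a)$-type duality. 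Modulo squares, $Q(a)^{-1}\equiv Q(a)$, which finishes the step.

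I expect this step — correctly identifying the new path and the restricted form, and especially the sign and transposition bookkeeping for the non-symmetric form in characteristic 2 — to be where the real work lies. Should the generation by reflections fail in the small exceptional cases, I would instead argue directly with the general product formula $[\varphi\psi]\subseteq[\varphi]+[\psi]$ using the same determinant computation.
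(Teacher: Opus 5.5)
Your proposal is correct. The paper gives no argument of its own for this lemma; it only cites Taylor's Theorem 11.50. What you outline is a self-contained version of the standard proof from Wall's theory:
\begin{itemize}
\item use the generation of $\OG(V,Q)$ by reflections (orthogonal transvections in characteristic $2$) to reduce to $\Theta_W(\varphi\sigma_a)=\Theta_W(\varphi)\,Q(a)$;
\item then split according to whether $a\in W=[\varphi]$.
\end{itemize}
I checked both cases with the paper's conventions (right action, $\theta=1-\varphi$, $\omega_\varphi(u\theta,w\theta)=f_Q(u,w\theta)$), and your identifications are right. For $a\notin W$ one has $[\varphi\sigma_a]=W\oplus Ka$, whose Gram matrix is block triangular with diagonal blocks $\omega_\varphi$ and $Q(a)$. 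For $a\in W$ one has $[\varphi\sigma_a]=H:=\{x\in W:\omega_\varphi(a,x)=0\}$ and $\omega_{\varphi\sigma_a}=\omega_\varphi|_H$. The citation buys brevity and handles the characteristic-$2$ bookkeeping once and for all. Your route shows that the only substantial input is the generation theorem; the rest is two Gram-matrix computations.

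A few things to tighten.

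\emph{The exceptional case.} The case $\pOG(4,K)$ with $K=\GF(2)$ needs no Siegel generators and no fallback argument, because $K^*/(K^*)^2$ is trivial there.

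\emph{The identity behind the case $a\in W$.} You did not state it, but it is this: if $a=v-v\varphi$, then $f_Q(v,a)=Q(v-v\varphi)=Q(a)$, i.e.\ $\omega_\varphi(a,a)=Q(a)$. Consequences:
\begin{itemize}
\item $v\varphi\sigma_a=v$;
\item since $\sigma_a$ fixes $W^\perp=\ker(\varphi-1)$ pointwise, $\ker(\varphi\sigma_a-1)\supseteq W^\perp\oplus Kv$;
\item hence $[\varphi\sigma_a]\subseteq W\cap v^\perp=H$, using $f_Q(v,x)=\omega_\varphi(a,x)$ for $x\in W$;
\item equality holds because $\varphi\sigma_a-1$ differs from $\varphi-1$ by a rank-one map.
\end{itemize}
In the case $a\notin W$ the analogous key point is that $a=k(1-\varphi\sigma_a)$ for some $k\in W^\perp$, which forces $\omega_{\varphi\sigma_a}(a,W)=0$.

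\emph{The determinant step.} The adjugate step, whose duality justification is vague, can be replaced by a one-line argument. Since $\omega_\varphi(a,a)=Q(a)\neq 0$, we have $W=Ka\oplus H$ and $\omega_\varphi(a,H)=0$. So in a basis $(a,h_1,\dots,h_{m-1})$ the Gram matrix of $\omega_\varphi$ is block triangular and $\det\omega_\varphi=Q(a)\det(\omega_\varphi|_H)$. This gives $\Theta_W(\varphi\sigma_a)\equiv\Theta_W(\varphi)Q(a)^{-1}\equiv\Theta_W(\varphi)Q(a)$ modulo squares.
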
 
	 
	 \begin{proof}
	 See \cite[11.50 Theorem]{Taylor}.
	 \end{proof}
	 		
	\begin{lemma}                             \label{SPINOR2}
		Assume that $Q$ has Witt index $\ne 0$. 
		Then  $\Omega(V,Q) = \ker \Theta \cap \SOG(V, Q)$ except when
		$K = \GF(2), \dim V = 4$, and $Q$ is hyperbolic.
	\end{lemma}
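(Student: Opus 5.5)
This lemma is the classical description of the commutator subgroup of an isotropic orthogonal group, so the plan is to reduce to the standard theorems rather than prove everything from scratch. The two inclusions are handled separately.

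\emph{The inclusion $\Omega(V,Q) \subseteq \ker\Theta_W \cap \SOG(V,Q)$.}
By Lemma \ref{SPINOR1}, $\Theta_W$ is a homomorphism into the abelian group $K^*/(K^*)^2$. The Dickson invariant $\Dickson$ is a homomorphism into $\mathbb{Z}/2$. Hence both vanish on commutators, and therefore on $\Omega(V,Q) = [\OG(V,Q),\OG(V,Q)]$.

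\emph{The reverse inclusion, step 1: generation by Siegel transformations.}
Since the Witt index is positive, there is a nonzero singular vector $e$. I will invoke the Eichler--Siegel/Dieudonné theorem: $\Omega(V,Q)$ is generated by the Siegel (Eichler) transformations $\rho_{e,u}$, where $e$ is singular and $u \perp e$. These transformations are commutators except in the small exceptional cases. This is where the exclusion of $K=\GF(2)$, $\dim V=4$, $Q$ hyperbolic enters: there $\OG(V,Q)\cong (S_3\times S_3)\rtimes 2$, and its commutator subgroup is strictly smaller than $\ker\Theta_W\cap \SOG(V,Q)$.

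\emph{Step 2: cancelling the spinor norm.}
Take $\varphi\in\ker\Theta_W\cap\SOG(V,Q)$. By Cartan--Dieudonné, or its characteristic~2 version, $\varphi$ is a product of an even number of reflections (orthogonal transvections) $\sigma_{a_1}\cdots\sigma_{a_{2m}}$, with $\prod Q(a_i)\in (K^*)^2$.
\begin{itemize}
\item Using the hyperbolic plane $H$ containing $e$, every nonzero value of $K$ is represented by $Q$ on $H$. So each $\sigma_{a_i}$ can be replaced, modulo $\Omega$, by a reflection in a vector of $H$.
\item Because $\Omega$ contains the Siegel transformations, $\OG/\Omega$ is abelian. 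Each pair $\sigma_a\sigma_b$ is then congruent modulo $\Omega$ to $\sigma_h\sigma_{h'}$ with $h,h'\in H$ and $Q(h)Q(h')=Q(a)Q(b)$.
\item This reduces the claim to the rank-two case. On $H$ one checks directly that $\sigma_h\sigma_{h'}$ with square spinor norm lies in the image of $\Omega$, in fact in the subgroup generated by squares of the torus of $H$.
\end{itemize}

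\emph{Main obstacle.}
The hard part is step 1: showing that Siegel transformations lie in, and generate, $\Omega$, including characteristic~$2$. There the Dickson invariant replaces the determinant, and one must track the small fields $\GF(2)$ and $\GF(3)$ carefully. Rather than reproving this, I will cite Taylor's treatment (the theorems following \cite[11.50 Theorem]{Taylor}, which identify $\Omega$ with the kernel of the spinor norm on $\SOG$ for Witt index $\ge 1$). I will then check that the only exception occurring there is the hyperbolic $4$-dimensional space over $\GF(2)$.
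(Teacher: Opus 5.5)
Your proposal is correct and essentially matches the paper, whose entire proof is the citation of \cite[11.51 Theorem]{Taylor}; your direct check of $\Omega(V,Q)\subseteq\ker\Theta_W\cap\SOG(V,Q)$, using Lemma \ref{SPINOR1} and the fact that the Dickson invariant is a homomorphism to $\mathbb{Z}/2$, is fine. As a side remark, your Step 2 already gives the reverse inclusion on its own (multiply the paired reflections into one element of the torus of $H$ with square spinor norm, which is a commutator in $\OG(H)$), so Step 1 is not needed, $\OG/\Omega$ is abelian simply because $\Omega$ is the commutator subgroup, and on that route the $\GF(2)$, $\dim V=4$ hyperbolic exception enters because $\pOG(4,2)$ is not generated by reflections.
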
 
	
\begin{proof}
	See \cite[11.51 Theorem]{Taylor}.
\end{proof}

 If $\varphi \in \SOG(V, Q)$, then $\Theta_W(\varphi)$ coincides with the classical spinor norm defined by Siegel, Eichler, Dieudonn\'e, and Kneser.

\subsection{The standard hyperbolic (split) orthogonal group.}

For a matrix $M \in \M(n,K)$ let $M'$ denote the transpose of $M$. A matrix $A \in \M(n,K)$ is 
alternating if it is congruent to $A_0 \oplus 0_{n-2m}$, where
\[
A_0 = \left (\begin{array} {cc} 0 & \Idm_m\\ -\Idm_m & 0 \end{array} \right ) \in \GL(n,K). 
\].
Now let $Q$ be hyperbolic. Then
the orthogonal group $\OG(V, Q)$ is isomorphic to the subgroup
 $\pOG(2n,K)$  of  all matrices 
\[
P = \left (\begin{array} {cc} A & B\\ C & D \end{array} \right ) \in \GL(n,K), 
\]
where  $A, B, C, D \in \M(n,K), AB'$ and $CD'$ are alternating, and $AD' + BC' =\Idm_n$.

If $\ch K \ne  2$, then $\pOG(2n,K) = \{P \in \GL(n,K); PGP' = G \}$ is the set of all automorphs $P$ of 
\[
G = \left (\begin{array} {cc} 0 & \Idm_n \\ \Idm_n & 0 \end{array} \right ).
\]
If $\ch K = 2$, then $\pOG(2n,K)$ is a subgroup of the symplectic group $\SpG(2n,K) = \{P \in \GL(n,K); PGP' = G \}$.

\begin{example} 
	Let $A \in \M(n,K)$ be idempotent. If $A = A'$ then
	\[
	S = \left (\begin{array} {cc} A & \Idm_n -A\\  \Idm_n-A & A \end{array} \right ) \in \pOG(2n,K),
	\]
	 and $S$ is an involution with $\Dickson(S) = \rank (A - \Idm_n)$,
\end{example}

\begin{lemma} \label{SPINOR3}
	Let
	\[
	S = \left (\begin{array} {cc} \Idm_n & 0\\  C & \Idm_n \end{array} \right ) \in \pOG(2n,K),
	T = \left (\begin{array} {cc} U & 0\\  0 & U^+ \end{array} \right ) \in \pOG(2n,K),
	\]
	where $U$ is a big transvection ($U$ is unipotent of index 2).
	 Then $\Theta(S), \Theta(T) = 1$.
\end{lemma}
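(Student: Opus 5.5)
We compute the Wall spinor norm of $S$ and of $T$ straight from the definition: $\Theta_W(\varphi)$ is the discriminant of the Wall form $\omega_\varphi$ on the residual space $V(\varphi-1)$. Throughout, row vectors act on the right and the standard hyperbolic basis is $e_1,\dots,e_n,f_1,\dots,f_n$, with $\langle e_1,\dots,e_n\rangle$ and $\langle f_1,\dots,f_n\rangle$ totally isotropic and $f_Q(e_i,f_j)=\delta_{ij}$. The plan has three steps: identify the residual space, write $\omega_\varphi$ as a concrete matrix there, and show its discriminant is a square.

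\emph{The matrix $S$.} For $v=(x,y)$ we have $v(S-1)=(yC,0)$, so the residual space $V(S-1)$ is $\{(yC,0)\}$, the image of $C$ inside $E=\langle e_1,\dots,e_n\rangle$. Since $S\in\pOG(2n,K)$, the block conditions give that $C$ (more precisely $C\,\Idm_n'$) is alternating, so $C$ has even rank $2m$. For $u=(x_1,y_1)$ and $w=(x_2,y_2)$,
\[
\omega_S\bigl(u-uS,\,w-wS\bigr)=f_Q\bigl(u,\,w-wS\bigr)=f_Q\bigl((x_1,y_1),(-y_2C,0)\bigr)=-\,y_1\,C'\,y_2' .
\]
Hence, on the image of $C$, the Wall form is represented by $C$ restricted to a complement of its kernel, up to sign and transpose. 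Using the normal form $C\sim A_0\oplus 0$, this restriction is a nondegenerate alternating form of rank $2m$ (an alternating matrix of this size in characteristic $2$ as well), so its Gram matrix is congruent to $A_0$ on a $2m$-dimensional space. Since $\det A_0=1$, the discriminant is a square and $\Theta(S)=1$. The point needing care is that $\omega_S$ is well defined on $V(S-1)$ and that the computed Gram matrix is genuinely the restriction of $C$ to a basis of the image. To get this I would choose $P\in\GL(n,K)$ with $PCP'=A_0\oplus 0$ and conjugate $S$ by $\operatorname{diag}\bigl(P'^{-1},P\bigr)$, which lies in $\pOG$. This reduces everything to $C=A_0\oplus 0$, where the computation is explicit.

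\emph{The matrix $T$.} Here $T$ preserves $E$ and $F=\langle f_1,\dots,f_n\rangle$, and $T-1=(U-1)\oplus(U^+-1)$. Since $U$ is a big transvection, $U-1$ has rank $r$ and square zero; conjugating inside $\GL(n)$, I may take
\[
U=\begin{pmatrix}\Idm_r&0&\Idm_r\\0&\Idm_{n-2r}&0\\0&0&\Idm_r\end{pmatrix},
\]
and this gives $U^+=U'^{-1}$. The residual space is then $E(U-1)\oplus F(U^+-1)$, of dimension $2r$. Computing $\omega_T$ between an $E$-part and an $F$-part amounts to pairing $u$ with $w(T-1)$. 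Because $E$ and $F$ are each totally isotropic, the Gram matrix comes out block anti-diagonal, with blocks of the form $\pm N$ and $\pm N'$, where $N$ is an invertible $r\times r$ matrix (here $\pm\Idm_r$) arising from $U-1$ restricted between image and coimage. Its determinant is $\pm(\det N)^2$, with sign $(-1)^r$. The main obstacle is controlling this sign: I expect the two off-diagonal blocks to come with opposite signs, $N$ and $-N'$, because $U^+-1=-(U-1)'U'^{-1}$. Then the determinant is $(-1)^r\cdot(-1)^r(\det N)^2=(\det N)^2$, a square. I would verify this carefully with the explicit normal form of $U$ above. Alternatively, one can sidestep the sign issue by noting that $T$ is a product of $r$ elements of the form $S$ and their conjugates by the standard involution swapping $E$ and $F$, and then applying Lemma \ref{SPINOR1} together with the result already obtained for $S$.
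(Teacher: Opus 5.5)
Your proof is correct, but it is organized differently from the paper's. The paper splits by characteristic. For $\ch K\ne 2$ it only observes that $S$ and $T$ are squares in $\pOG(2n,K)$: replace $C$ by $C/2$, and $U$ by $\Idm_n+\tfrac12(U-\Idm_n)$. Hence $\Theta(S)=\Theta(T)=1$, since $\Theta$ is a homomorphism into $K^*/(K^*)^2$ (Lemma \ref{SPINOR1}). Only for $\ch K=2$ does it normalize $C$ and $U$ and note that the Wall forms are alternating.

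You compute the Wall forms directly in every characteristic. This is uniform, but it forces the sign bookkeeping you postponed. That bookkeeping comes out as you predict. In your normal form the residual space of $T$ has basis $e_{n-r+1},\dots,e_n,f_1,\dots,f_r$, since $e_{n-r+i}=(-e_i)-(-e_i)T$ and $f_i=f_{n-r+i}-f_{n-r+i}T$. The Gram matrix of $\omega_T$ in this basis is $\begin{pmatrix}0&-\Idm_r\\ \Idm_r&0\end{pmatrix}$, which has determinant $1$.

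A cleaner way to remove the sign issue is the identity $\omega_\varphi(x,x)=Q(x)$ for $x\in V(\varphi-1)$, which is immediate from the definition. The residual spaces of $S$ and $T$ are totally singular (for $T$ because $(U-\Idm_n)^2=0$). So both Wall forms are alternating and have square discriminant. This is exactly the paper's characteristic-$2$ argument, and it works in every characteristic.

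Two points of detail need correcting.

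\begin{enumerate}
\item For $S$ with $C=C_0\oplus 0$, where $C_0$ is nonsingular of size $2m$, the Gram matrix of $\omega_S$ on $\operatorname{im}C$ is $-C_0^{-1}$, not the restriction of $C$. This is harmless: its determinant is still a square, and after your reduction to $C=A_0\oplus 0$ it equals $A_0$.
\item Your fallback for $T$ --- writing it as a product of $r$ elements of type $S$ and their conjugates under the swap --- is not correct as stated. For $n=2$ it fails outright. All such elements preserve the totally singular plane $\langle e_1,f_2\rangle$, whereas $T$ with $U=\begin{pmatrix}1&1\\0&1\end{pmatrix}$ maps $e_1$ to $e_1+e_2$.
\end{enumerate}

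So rely on the direct computation, or on the alternating-form observation above.
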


\begin{proof}
	If $\ch K \ne  2$, then $S$  and  $T$ are squares. 
	Let $\ch K = 2$. To compute $\Theta(S)$, we may assume that $C = 0_{n-2m} \oplus  \Idm_m \otimes H$, where $H \in \GL(2,K)$ is alternating.
	Then the Wall form of $S$ is congruent to the alternating matrix $\Idm_m \otimes H$.

We compute $\Theta(T)$ and  may assume that  $U = \Idm_{n-2k} \oplus  \Idm_k \otimes H$.
Again, the Wall form of $T$ is congruent to the alternating matrix $\Idm_k \otimes H$.
\end{proof}

\section{Proof of theorem \ref{theorem-1}}

Let 
\[
P = \left (\begin{array} {cc} A & B \\ C & D \end{array} \right ), 
\]
where $A, B,C,D \in \M(n,K)$. The matrix $A$ is the principal corner
$\PC(P) = \PC_n(P)$ of $P$. The matrix $B$ is the congruence
corner $\CC(P)$. If $\PC(P)$ is regular let $\SC(P) :=D -CA^{-1}B$ denote the Schur complement
of $\PC(P)$ in $P$. 

Let $\Gamma_{\mathrm{L}} = \Gamma_{\mathrm{L}}(2n,K)$ be the set of all $P \in \pOG(2n,K)$ with $\CC(P)=0_n$.Let $\Gamma_{\mathrm{U}}(2n,K) = \{P \in \Gamma(n,K); P' \in \Gamma_{\mathrm{L}}(2n,K)\}$.

In the case of a nonscalar conjugacy class $\Psi$ of a hyperbolic unitary vector space, Bünger \cite{Bunger-1997} has shown that $\Psi$ contains a matrix $P$   such that $\PC(P)$ is nonscalar and $\det \PC(P) = \alpha$ if  $\det \Psi =  \alpha \overline{\alpha}^{-1}$. We want to prove a similar result.

\begin{lemma}                             \label{OGHYP1}
	Let $P \in \pOG(2n,K)$, and
	let  $A = \PC(P)$ be nonsingular. Then $\SC(P) = A^+ :=(A')^{-1}$.
	Further, $\Dickson(P) = 0$ and $\Theta(P) = \det A$. 
\end{lemma}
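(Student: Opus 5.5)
The plan is to factor $P$ as a block-LDU product inside $\pOG(2n,K)$ and read off all three assertions from the factors. Since $A$ is invertible, formal block elimination gives
\[
P = \left(\begin{array}{cc} \Idm_n & 0\\ CA^{-1} & \Idm_n\end{array}\right)
\left(\begin{array}{cc} A & 0\\ 0 & \SC(P)\end{array}\right)
\left(\begin{array}{cc} \Idm_n & A^{-1}B\\ 0 & \Idm_n\end{array}\right).
\]
I first show that the outer factors lie in $\pOG(2n,K)$. By the defining conditions, a matrix $\left(\begin{smallmatrix}\Idm & 0\\ X & \Idm\end{smallmatrix}\right)$ lies in $\pOG(2n,K)$ exactly when $X$ is alternating, and likewise for the upper unitriangular shape.

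For the right factor: $AB'$ is alternating, so its congruent transform $A^{-1}(AB')(A^{-1})' = (A^{-1}B)'$ is alternating, hence so is $A^{-1}B$. For the left factor I use that $\pOG(2n,K)$ is closed under transposition. For $\ch K\ne 2$ this follows from $PGP'=G \Rightarrow P'GP=G$. For $\ch K=2$ it can be checked from the defining conditions, or via $P^{-1}$ and the symplectic relation. Applying the defining conditions to $P'$ shows that $A'C$ is alternating, and then $(A')^{-1}(A'C)A^{-1}=CA^{-1}$ is alternating. Consequently the middle factor $\operatorname{diag}(A,\SC(P))$ lies in $\pOG(2n,K)$. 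For a block-diagonal element the condition $AD'+BC'=\Idm_n$ reads $A\,\SC(P)'=\Idm_n$, so $\SC(P)=A^+$.

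For the Dickson invariant, I use that $\Dickson$ is a homomorphism to $\mathbb{Z}/2$. For a unitriangular factor, $\rank(S-1)=\rank X$, which is even because $X$ is alternating. For $\operatorname{diag}(A,A^+)$ we have $\rank(A^+-\Idm)=\rank(A'-\Idm)\cdot$ invariance, i.e. $\rank(\Idm-A^{-1})=\rank(A-\Idm)$, so the total rank is $2\rank(A-\Idm)$. Hence $\Dickson(P)=0$.

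For the spinor norm, Lemma \ref{SPINOR3} gives $\Theta=1$ on the lower unitriangular factor. The upper one is handled by the symmetric argument, or by conjugating with the element swapping the two halves. The map $A\mapsto\Theta(\operatorname{diag}(A,A^+))$ is a homomorphism $\GL(n,K)\to K^*/(K^*)^2$ by Lemma \ref{SPINOR1}. It kills transvections, since a transvection is unipotent of index $2$ and Lemma \ref{SPINOR3} applies. As $\SL(n,K)$ is generated by transvections, the map factors through $\det$. It therefore suffices to treat $A=\operatorname{diag}(a,1,\dots,1)$ with $a\ne 1$.

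In that case the element acts as $e_1\mapsto ae_1$, $f_1\mapsto a^{-1}f_1$ on a hyperbolic plane and trivially elsewhere. The Wall form on $\langle e_1,f_1\rangle$ has zero diagonal entries and off-diagonal entries $1/(1-a)$ and $1/(1-a^{-1})$. Its discriminant is therefore
\[
-\frac{1}{(1-a)(1-a^{-1})} = \frac{a}{(a-1)^2}\equiv a \pmod{(K^*)^2}.
\]
Hence $\Theta(P)=\det A$.

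The main obstacle is the characteristic $2$ bookkeeping. This concerns transposition-closure of $\pOG(2n,K)$ (equivalently, showing $CA^{-1}$ is alternating rather than merely skew) and confirming that the upper unitriangular factor also has trivial spinor norm. Both should follow from the symmetric roles of the two totally isotropic halves.
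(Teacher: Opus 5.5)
Your proposal is correct and follows essentially the same route as the paper. Both use the factorization $P = LQU$ with $Q=\operatorname{diag}(A,\SC(P))$, Lemma~\ref{SPINOR3} for the unitriangular factors and for transvections, and a Wall-form computation for a single dilatation (your $a/(a-1)^2$ and the paper's $\alpha(1-\alpha)^2$ agree modulo squares), and you additionally verify several points the paper only asserts: that $L$, $Q$, $U$ lie in $\pOG(2n,K)$ (which gives $\SC(P)=A^+$), that each factor has Dickson invariant $0$, and that the upper unitriangular factor has trivial spinor norm via conjugation by $G$.
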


\begin{proof}
	Let 
	\[
	P = \left (\begin{array} {cc} A & B \\ C & D \end{array} \right ),
	L = \left (\begin{array} {cc} \Idm_n & 0 \\ CA^{-1} & \Idm_n \end{array} \right ),
	Q = \left (\begin{array} {cc} A & 0 \\ 0 & A^+ \end{array} \right ),
	U = \left (\begin{array} {cc} \Idm_n & A^{-1}B \\ 0 & \Idm_n \end{array} \right ).
	\]
	Then $\SC(P) = (D - C^{-1}B) = A^+$,  $L, Q,  U \in \pSOG(2n,K)$, and $P = L Q U$.
	By \ref{SPINOR3}, $\Theta(P_1), \Theta(P_2) = 1$.
	We show that  $\Theta(Q) = \det A$:
	  
	First let $\det A = 1$. Then $A$ is a product of transvections.  It follows from  \ref{SPINOR3}, that $\Theta(Q) = 1$.
	So let $\det A = \alpha \ne 1$. Then $A$ is a product of transvections and a dilatation, and $Q$ is a product of matrix with trivial spinor norm  and a matrix
	\[
	Z = \left (\begin{array} {cc} \Idm _{n-1} \oplus \alpha \Idm_1& 0 \\ 0 &  \Idm _{n-1} \oplus \alpha^{-1} \Idm_1 \end{array} \right ).
	\]
	The Wall form of $Z$ is congruent to 
	\[
	\left (\begin{array} {cc} 0 & 1-\alpha^{-1} \\ 1-\alpha &  0\end{array} \right ).
	\]
	Hence $\Theta(Q) = \Theta(Z) = \alpha (1-\alpha)^2$.
\end{proof}

Let $E_k = \Idm_k \oplus 0_{n-k} \in \M(n,K)$ and 
\[
S_k = \left (\begin{array} {cc} \Idm_n - E_k & E_k\\ E_k & \Idm_n -E_k \end{array} \right ). 
\]

\begin{lemma}                           \label{OGHYP2}
If  $\Gamma_{\mathrm{L}} S_k \Gamma_{\mathrm{L}} \cap \Gamma_{\mathrm{L}} S_m, \Gamma_{\mathrm{L}} \ne \emptyset$, then $m = k$.
\end{lemma}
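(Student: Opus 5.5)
Reading the statement as a Bruhat-type disjointness claim: if $\Gamma_{\mathrm{L}} S_k \Gamma_{\mathrm{L}} \cap \Gamma_{\mathrm{L}} S_m \Gamma_{\mathrm{L}} \ne \emptyset$, then $m=k$. The plan is to exhibit an invariant of the double coset $\Gamma_{\mathrm{L}} P \Gamma_{\mathrm{L}}$ that takes different values on $S_k$ and $S_m$ for $k\ne m$. The natural candidate is $\rank \CC(P)$.

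First we describe $\Gamma_{\mathrm{L}}$. If $P\in \pOG(2n,K)$ has $B=\CC(P)=0$, then the condition $AD'+BC'=\Idm_n$ forces $AD'=\Idm_n$, so $A$ is invertible and $D=A^+$. Hence every element of $\Gamma_{\mathrm{L}}$ has the form
\[
L=\left(\begin{array}{cc} A & 0\\ C & A^+\end{array}\right)
\]
with $A\in\GL(n,K)$.

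Next we compute the congruence corner of a product. Take $L_1,L_2\in\Gamma_{\mathrm{L}}$ with diagonal blocks $A_1,A_1^+$ and $A_2,A_2^+$, and let
\[
P=\left(\begin{array}{cc} X & Y\\ Z & W\end{array}\right).
\]
Block multiplication gives $L_1P=\left(\begin{array}{cc} A_1X & A_1Y\\ \ast & \ast\end{array}\right)$. Multiplying on the right by $L_2$, the upper right block becomes $A_1Y A_2^+$, because the zero upper-right block of $L_2$ kills the contribution from $A_1X$. Thus
\[
\CC(L_1PL_2)=A_1\,\CC(P)\,A_2^+ ,
\]
and since $A_1$ and $A_2^+$ are invertible, $\rank \CC(L_1PL_2)=\rank \CC(P)$. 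So $\rank\CC$ is constant on each double coset $\Gamma_{\mathrm{L}}P\Gamma_{\mathrm{L}}$.

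Finally, $\CC(S_k)=E_k$ has rank $k$. If the two double cosets meet, they coincide, so $k=\rank E_k=\rank E_m=m$. We should also check that $S_k\in\pOG(2n,K)$, which follows from the Example with $A=\Idm_n-E_k$, a symmetric idempotent.

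There is no real obstacle. The only point requiring care is the block multiplication showing that the upper-right block transforms by $Y\mapsto A_1YA_2^+$. The statement as printed, ``$\Gamma_{\mathrm{L}} S_m, \Gamma_{\mathrm{L}}$'', is read as the typo $\Gamma_{\mathrm{L}} S_m \Gamma_{\mathrm{L}}$. The argument also works for a single coset $\Gamma_{\mathrm{L}} S_m$, since rank invariance under left multiplication alone suffices.
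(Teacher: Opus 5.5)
Your proof is correct and follows the paper's argument. The paper writes a common element as $LS_k = S_m\widetilde{L}$ with $L,\widetilde{L}\in\Gamma_{\mathrm{L}}$ and compares upper-right blocks, $L_1E_k = E_mL_4$; this is your invariance of $\rank \CC$ under $\CC(P)\mapsto A_1\,\CC(P)\,A_2^+$, and your check that $AD'=\Idm_n$ forces invertible diagonal blocks supplies a step the paper leaves implicit.
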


\begin{proof}
	Assume that   $\Gamma_{\mathrm{L}} S_k \Gamma_{\mathrm{L}} \cap \Gamma_{\mathrm{L}} S_m, \Gamma_{\mathrm{L}} \ne \emptyset$. Then there exist matrices 
	\[
	L = \left (\begin{array} {cc} L_1  & 0\\ X &  L_2 \end{array} \right ),
	\widetilde{L} = \left (\begin{array} {cc} L_3  & 0\\ Y &  L_4 \end{array} \right ) 
	\in  \Gamma_{\mathrm{L}}
	\]
	such that $L S_k = S_m \widetilde{L}$. Hence $L_1 E_k = E_m L_4$, and $m = k$.
\end{proof}

For a matrix $M \in \M(m,K)$ let  $\nu(M) := m - \rank M$ denote the nullity of $M$.

\begin{lemma}                           \label{OGHYP3}
	Let $P \in \pOG(2n,K)$. Then 
	$P \in \Gamma_{\mathrm{L}} S_k \Gamma_{\mathrm{L}}$, where $k = \rank(\CC (P)) \equiv  \nu(\PC (P)) \mod 2$.
\end{lemma}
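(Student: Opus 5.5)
We prove this as a Bruhat-type decomposition for the parabolic subgroup $\Gamma_{\mathrm{L}}$, the stabilizer of the totally isotropic subspace spanned by the last $n$ basis vectors. The idea is to multiply $P$ on both sides by elements of $\Gamma_{\mathrm{L}}$ until it has the shape of $S_k$. Throughout we use that for $L\in\Gamma_{\mathrm{L}}$ with diagonal blocks $L_1,L_2$ we have $L_2=L_1^+$. Indeed, the defining conditions $AD'+BC'=\Idm_n$ with $B=0$ give $L_1L_2'=\Idm_n$.

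\textbf{Step 1: the rank of $\CC(P)$ is invariant.} Take $L,\widetilde L\in\Gamma_{\mathrm{L}}$ and write $P=\left(\begin{smallmatrix}A&B\\C&D\end{smallmatrix}\right)$. A direct block computation gives
\[
\CC(LP\widetilde L)=L_1 B \widetilde L_1^{+}.
\]
So the rank of $\CC$ is unchanged under two-sided multiplication by $\Gamma_{\mathrm{L}}$. The principal corner becomes $\PC(LP\widetilde L)=L_1(A\widetilde L_1+B\widetilde Y)$, where $\widetilde Y$ is the lower-left block of $\widetilde L$; this changes $A$ only by adding multiples of $B$.

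\textbf{Step 2: normalize $B$ and then clear $A$.} Choose $L_1,\widetilde L_1\in\GL(n,K)$ so that $L_1B\widetilde L_1^+=E_k$, where $k=\rank B$; the remaining blocks of $L,\widetilde L$ can be taken zero off the diagonal. Now $B=E_k$. The condition that $AB'$ is alternating says that $AE_k$ is alternating. Hence the first $k$ columns of $A$ form an $n\times k$ block whose top $k\times k$ part is alternating and whose lower $(n-k)\times k$ part vanishes.

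Next, right multiplication by a lower unitriangular element $\left(\begin{smallmatrix}\Idm&0\\Y&\Idm\end{smallmatrix}\right)\in\Gamma_{\mathrm{L}}$ (which requires $Y$ to be alternating) replaces $A$ by $A+E_kY$. This only changes the first $k$ rows of $A$. Left multiplication by such elements changes $C$ and $D$, and block-diagonal elements act on $A$ by $L_1A\widetilde L_1$; we must keep both stabilizing $E_k$. Using these moves, we aim to make $A=\Idm_n-E_k$. The steps are:
\begin{itemize}
\item Kill the top-left $k\times k$ alternating block with $Y=-\left(\begin{smallmatrix}A_{11}&*\\ *&*\end{smallmatrix}\right)$.
\item Kill the top-right block, using alternating $Y$ whose off-diagonal blocks are $\pm$ transposes of each other.
\item Show that the lower-right block $A_{22}$ is then invertible, because $P$ is invertible and $B=E_k$. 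We expect the condition $AD'+BC'=\Idm$ to give this, and we normalize $A_{22}=\Idm_{n-k}$ by block-diagonal moves that preserve $E_k$.
\end{itemize}
Once $A=\Idm-E_k$ and $B=E_k$, a final left multiplication by an element of $\Gamma_{\mathrm{L}}$ adjusts $C$ and $D$ to $E_k$ and $\Idm-E_k$. Such an element exists because $S_k^{-1}$ times the current matrix lies in $\Gamma_{\mathrm{L}}$: its upper-right block vanishes by the orthogonality relations. This yields $P\in\Gamma_{\mathrm{L}}S_k\Gamma_{\mathrm{L}}$.

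\textbf{Step 3: the parity statement.} For $S_k$ we have $\PC(S_k)=\Idm_n-E_k$, of nullity $k$. We need that $\nu(\PC)\bmod 2$ is invariant under $P\mapsto LP\widetilde L$. Multiplying by the invertible blocks $L_1$ and $\widetilde L_1$ preserves nullity. The remaining move is $A\mapsto A+B\widetilde Y$ with $\widetilde Y$ alternating. We expect to show that this changes the nullity by an even number, using the fact that $AB'$ is alternating. An alternative route avoids this computation: first show that the Dickson invariant of any $P$ equals $\nu(\PC(P))\bmod 2$, for instance by the same factorization as in Lemma~\ref{OGHYP1} applied after the reduction. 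Then use that $\Dickson$ is a homomorphism with $\Dickson(\Gamma_{\mathrm{L}}\cap\SOG)=0$, so that $\Dickson(P)=\Dickson(S_k)=k$ by the Example.

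\textbf{Main obstacle.} I expect the hardest part to be this parity step, in particular in characteristic $2$, where $\Gamma_{\mathrm{L}}$ contains elements such as $S_1$-conjugates with odd Dickson invariant. The plan is to verify directly that every element of $\Gamma_{\mathrm{L}}$ has $\Dickson=0$. Its principal corner is nonsingular, so Lemma~\ref{OGHYP1} applies. With that, the Dickson route in Step 3 closes the argument, and Step 2 is routine bookkeeping.
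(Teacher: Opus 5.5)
\textbf{The gap is in Step 3.} Your Step 2 is essentially the paper's argument. You normalize $\CC(P)$ to $E_k$ by block-diagonal elements of $\Gamma_{\mathrm{L}}$, clear the first $k$ rows of $\PC$ with a right factor $\left(\begin{smallmatrix}\Idm_n&0\\ Y&\Idm_n\end{smallmatrix}\right)$ with $Y$ alternating, and use that $A_{22}$ is invertible. What is missing is the congruence $k\equiv\nu(\PC(P))\pmod 2$, which is exactly what Lemma~\ref{OGHYP5} later needs.

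Your first route for it is only announced, not carried out. Your second route does not reduce the problem:
\begin{enumerate}
\item The fact $\Dickson(\Gamma_{\mathrm{L}})=0$ is fine, since $\PC(L)=L_1$ is invertible and Lemma~\ref{OGHYP1} applies. Your worry about elements of odd Dickson invariant in $\Gamma_{\mathrm{L}}$ in characteristic $2$ is unfounded.
\item Once $P=LS_k\widetilde L$ with $L,\widetilde L\in\Gamma_{\mathrm{L}}$, you get $\Dickson(P)\equiv k$. So your auxiliary claim $\Dickson(P)\equiv\nu(\PC(P))$ is \emph{equivalent} to the statement you want; it is not a route to it.
\item The factorization $P=LQU$ of Lemma~\ref{OGHYP1} exists only when $\PC(P)$ is nonsingular, so it says nothing about singular principal corners.
\item Making $\PC$ nonsingular first, via Lemmas~\ref{OGHYP4} and \ref{OGHYP5}, itself presupposes the parity.
\end{enumerate}

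\textbf{How to close it.} The missing step is already contained in your Step 2, and the paper's normalization gives it implicitly. The block-diagonal moves send $\PC(P)$ to $L_1A\widetilde L_1$, so they preserve $\nu(\PC(P))$ exactly. After them, $B=E_k$ and $A=\left(\begin{smallmatrix}A_{11}&A_{12}\\ 0&A_{22}\end{smallmatrix}\right)$, with $A_{11}$ alternating and $A_{22}$ invertible. Row reduction with $A_{22}$ gives $\rank A=\rank A_{11}+(n-k)$. Hence $\nu(A)=k-\rank A_{11}\equiv k\pmod 2$, because alternating matrices have even rank. Since $\rank\CC$ is invariant, this also shows a posteriori that $A\mapsto A+B\widetilde Y$ preserves $\nu \bmod 2$.

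\textbf{A smaller slip at the end of Step 2.} Let $M$ be the normalized matrix. It is $MS_k^{-1}$, not $S_k^{-1}M$, that lies in $\Gamma_{\mathrm{L}}$. This holds automatically, because $\CC(MS_k)=(\Idm_n-E_k)E_k+E_k(\Idm_n-E_k)=0$; no orthogonality relations are needed. By contrast, $S_k^{-1}M$ has upper-right block $E_kD$, which can be nonzero. For example, take $k=n=2$ and $M=\left(\begin{smallmatrix}0&\Idm_2\\ \Idm_2&D\end{smallmatrix}\right)$ with $D\neq 0$ alternating.
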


\begin{proof}
	Let $P_1 = QPR$, where  
	\[
	P=\left (\begin{array} {cc} A & B \\ C & D \end{array} \right ),
	Q= \left (\begin{array} {cc} X & 0 \\ 0 & X^+ \end{array} \right ),
	R=\left (\begin{array} {cc} Y & 0 \\ 0 & Y^+ \end{array} \right ).
	\]
	Then $\CC(P_1) = XBY^+$. 
	Hence we may assume that $B = \Idm_k \oplus 0_{n-k}$.
	Let 
	\[
	A = \left (\begin{array} {cc} A_1 & A_2 \\ A_3 & A_4 \end{array} \right ),
	D = \left (\begin{array} {cc} D_1 & D_2 \\ D_3 & D_4 \end{array} \right ),
	\]
	where $A_1, D_1 \in \M(k,K), A_4,D_4 \in \M(n-k,K)$. Then
	$A_3 = 0$ and $A_1$ is alternating since $AB'$ is alternating
	so that $A_4$ must be nonsingular and we can even achieve that
	$A_4 = \Idm_{n-k}$. And since $B'$ is alternating and $AD' +  BC' = \Idm_{2n}$, it follows that $D_2 = 0$ and $D_4 = \Idm_{n-k}$.
	Put 
	\[
	S = \left (\begin{array} {cc} -A_1 & -A_2 \\ -A_2' & 0 \end{array} \right ),
	T = \left (\begin{array} {cc} -D_1 & -D_3' \\ -D_3 & T_4  \end{array} \right ).
	\]
	Then $S$ and $T$ are alternating and 
	\[
	\widetilde{A} := A+BS =\left (\begin{array} {cc} 0 & 0 \\ 0 & \Idm_{n-k} \end{array} \right ),
	\widetilde{D} := D+TB=\left (\begin{array} {cc} 0 & 0 \\ 0 &  \Idm_{n-k}\end{array} \right ).
	\]
	Now 
	\[
	\left (\begin{array} {cc} I & 0 \\ T & I \end{array} \right )
	\left (\begin{array} {cc} A & B \\ C & D \end{array} \right ) 
	\left (\begin{array} {cc} I & 0 \\ S & I \end{array} \right )
	= \left (\begin{array} {cc} A+BS & B \\ C+DS +T(A+BS) & D +TB\end{array} \right ).
	\]
	Let 
	\[
	\widetilde{C} = C+DS +T(A+BS) =
	\left (\begin{array} {cc} G_1 & G_2 \\ G_3 & G_4 \end{array} \right ).
	\]
	Then $G_2 = 0$ since $\widetilde{C} \widetilde{D}'$ is alternating.
	And $G_3 = 0$ since $\widetilde{A}' \widetilde{C}$ is alternating.
	From $\widetilde{A} \widetilde{D}' + B \widetilde{C}'
	= \Idm_{2n}$ we see that $G_1 =  \Idm_k$. Finally
	$G_4 = 0$ for a suitable matrix $T_4$.
\end{proof}

\begin{lemma}                                            \label{OGHYP4}
	Let $A, B \in \M(n,K)$ such that $\rank (A,B) = n$ and $A B'$ is alternating.
	Furthermore, assume that $\nu(A) \equiv 0 \mod 2$.
	Then there exists an alternating matrix $X \in \M(n,K)$ such that $A+BX$ is nonsingular.
\end{lemma}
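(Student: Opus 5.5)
Since $\rank(A,B)=n$ and $AB'$ is alternating, the row space of $(A,B)$ is an $n$-dimensional totally isotropic subspace of the hyperbolic space $K^{2n}$ with the standard form. The problem is unchanged under two kinds of transformation. First, we may replace $(A,B)$ by $(XA,XB)$ for $X\in\GL(n,K)$, since nonsingularity of $A+BY$ is preserved. Second, we may replace $(A,B)$ by $(AZ,BZ^+)$, conjugating the alternating matrix $Y$ by $Z$: indeed $(A+BY)Z=AZ+BZ^+(Z'YZ)$, and $Z'YZ$ is again alternating.

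Put $r=\rank A$ and $\nu=\nu(A)=n-r$. Using these row and column operations, I bring $A$ to the normal form $A=\Idm_r\oplus 0_\nu$. Write $B$ in blocks $B_1,B_2,B_3,B_4$ conformally with this splitting. The condition $AB'$ alternating says that the block $\left(\begin{smallmatrix} B_1' & B_3'\\ 0&0\end{smallmatrix}\right)$ is alternating. Hence $B_3=0$ and $B_1$ is alternating. Then $\rank(A,B)=n$ forces the bottom rows $(0,0,0,B_4)$ to have rank $\nu$, so $B_4$ is nonsingular. A further row operation then clears $B_2$ and normalizes $B_4=\Idm_\nu$, keeping $A$ fixed; this is the same normalisation as in the proof of Lemma \ref{OGHYP3}.

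Now I try $Y=0_r\oplus Y_4$ with $Y_4\in\M(\nu,K)$ alternating. Then
\[
A+BY=\left(\begin{array}{cc}\Idm_r & B_2Y_4\\ 0 & B_4Y_4\end{array}\right),
\]
which is block triangular with determinant $\det B_4\det Y_4$. Since $\nu$ is even, there is a nonsingular alternating $Y_4$, for instance
\[
\left(\begin{array}{cc}0&\Idm_{\nu/2}\\ -\Idm_{\nu/2}&0\end{array}\right).
\]
Hence $A+BY$ is nonsingular. Transforming back via the second operation gives the required alternating matrix $X$.

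The main obstacle is checking the first reduction carefully, in particular that the column change acts on $B$ by $Z^+$ and keeps $AB'$ alternating; in characteristic 2 "alternating" means zero diagonal, not merely skew. One has $(AZ)(BZ^+)'=AZZ^{-1}B'=AB'$, so this is immediate. The parity hypothesis is used exactly once, in the existence of a nonsingular alternating $\nu\times\nu$ matrix. Without it, $\nu(A+BX)\equiv\nu(A)$ would be odd and hence nonzero, which matches the parity statement in Lemma \ref{OGHYP3}.
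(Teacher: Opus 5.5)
Your proof is correct and follows the paper's argument essentially step for step. You reduce via $(A,B)\mapsto(RAS,RBS^+)$ to $A=\Idm_{n-\nu}\oplus 0_\nu$, deduce $B_3=0$ and $B_4$ nonsingular, and take $X=0_{n-\nu}\oplus Y_\nu$ with $Y_\nu$ nonsingular alternating, so $A+BX$ is block upper triangular and nonsingular; your extra normalisation $B_2=0$, $B_4=\Idm_\nu$ is harmless but unused, and the letter $X$ is used for both the row operation and the final alternating matrix.
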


\begin{proof}
	Let $R,S \in \GL(n,K)$, and put $\widetilde{A} = RAS, \widetilde{B} = R B S^+$. 
	Then $\widetilde{A} \widetilde{B}' = R AB' R'$ is alternating and $\rank (\widetilde{A},\widetilde{B}) = n$. If $\widetilde{A} + \widetilde{B} \widetilde{X}$ is nonsingular for some alternating matrix $\widetilde{X} \in \M(n,K)$, then
	$ A + B S^+ \widetilde{X} S^{-1} = R^{-1}(\widetilde{A} + \widetilde{B} \widetilde{X} )S^{-1}$ is nonsingular.
	
	Hence we may assume that $A = \Idm_{n-\nu} \oplus 0_{\nu}$.
	Let 
	\[
	A = \left (\begin{array} {cc} \Idm_{n-\nu} & 0 \\ 0 & 0_v \end{array} \right ),
	B = \left (\begin{array} {cc} B_1 & B_2 \\ B_3 & B_4 \end{array} \right ),
	X = \left (\begin{array} {cc} 0_{n-\nu} & 0 \\ 0 & Y_{\nu} \end{array} \right ),
	\]
	where $B$ is partitioned as $A$ and $Y_{\nu} \in \GL(\nu,K)$ is alternating.
	Then $B_3 = 0$ and $B_4$ must be regular. Then 
	\[
	A+BX = \left (\begin{array} {cc} \Idm_{n-\nu} & B_2 Y_{\nu} \\ 0 & B_4 Y_{\nu}\end{array} \right )
	\]
	is nonsingular.
\end{proof}

\begin{lemma}                                          \label{OGHYP5}
	Let $n \ge 2$. Let $P \in \pSOG(2n,K)$. Then $P$ is conjugate to a matrix $\widetilde{P}$ such that $\PC(\widetilde{P})$ is nonsingular. 
\end{lemma}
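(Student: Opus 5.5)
We will reduce the statement to Lemma \ref{OGHYP4}. A conjugate $\widetilde{P} = R^{-1} P R$ with $R \in \pOG(2n,K)$ has principal corner $\PC(\widetilde{P})$ built from the first block row of $R^{-1}$, the matrix $P$, and the first block column of $R$. The first $n$ rows of $R^{-1}$ span a maximal totally isotropic subspace $W$. The condition that $\PC(\widetilde{P})$ be nonsingular says that $WP$ is complementary to the totally isotropic subspace $W^\ast$ spanned by the remaining rows. Since $R$ is free, we may choose $W^\ast$ as any maximal totally isotropic complement of $W$. So it suffices to find a maximal totally isotropic subspace $W$ such that $WP$ has a common totally isotropic complement with $W$.

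The simplest choice is $R = \begin{pmatrix} \Idm_n & X \\ 0 & \Idm_n\end{pmatrix}$ with $X$ alternating. This lies in $\pOG(2n,K)$, and by Lemma \ref{SPINOR3} (applied to the transpose) it is harmless for spinor norm considerations. Write $P = \begin{pmatrix} A & B \\ C & D\end{pmatrix}$. Then $R^{-1} P R$ has principal corner $A - XC$. Transposing, $(A - XC)' = A' + C'X$, since $X' = -X$. Because $P'$ is also in $\pOG(2n,K)$, the pair $(A', C')$ satisfies the hypotheses of Lemma \ref{OGHYP4}: $\rank(A', C') = n$, since the first block column of $P$ has rank $n$, and $A'C$ is alternating. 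Hence if $\nu(A) \equiv 0 \mod 2$, Lemma \ref{OGHYP4} yields an alternating $X$ with $A' + C'X$ nonsingular, and we are done.

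The parity condition is where $P \in \pSOG(2n,K)$ enters. By Lemma \ref{OGHYP3}, $P \in \Gamma_{\mathrm{L}} S_k \Gamma_{\mathrm{L}}$ with $k \equiv \nu(\PC(P)) \mod 2$. Elements of $\Gamma_{\mathrm{L}}$ are products of block-lower-unitriangular and block-diagonal matrices. These have Dickson invariant $0$, either by Lemma \ref{OGHYP1} or directly, since their principal corners are nonsingular. Meanwhile $S_k$ is the involution of the Example with $\Dickson(S_k) = \rank(E_k - \Idm_n) \ldots$; more precisely, it is the product of $k$ reflections swapping $e_i$ and $f_i$, so $\Dickson(S_k) \equiv k$. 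Since $\Dickson$ is a homomorphism to $\mathbb{Z}/2$, $\Dickson(P) = 0$ forces $k$ to be even, and hence $\nu(A)$ is even.

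The main obstacle I expect is making the Dickson invariant bookkeeping rigorous, especially in characteristic $2$. There one must check that $\Dickson(S_k) = k \bmod 2$ via the rank of $S_k - \Idm_{2n}$, which equals $k$, and that the elements of $\Gamma_{\mathrm{L}}$ lie in $\SOG$. The latter follows from Lemma \ref{OGHYP1}, because their principal corners are nonsingular. The hypothesis $n \ge 2$ does not seem needed in this argument; it is presumably included for later use. If the transvection-type conjugation were insufficient, one could also use a block-diagonal conjugation first, but Lemma \ref{OGHYP4} already handles everything.
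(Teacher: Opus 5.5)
Your proof is correct and follows the paper's route: get $\nu(A)$ even from Lemma \ref{OGHYP3}, then apply Lemma \ref{OGHYP4} and conjugate by a unipotent block-triangular element. The differences are cosmetic. The paper conjugates by $\begin{pmatrix} \Idm_n & 0 \\ X & \Idm_n \end{pmatrix}$ and applies Lemma \ref{OGHYP4} directly to $(A,B)$ to get corner $A+BX$, which avoids your transposition step. You, in turn, spell out the Dickson-invariant bookkeeping ($\Gamma_{\mathrm{L}} \subseteq \SOG$ and $\Dickson(S_k) \equiv k$) that the paper leaves implicit.
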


\begin{proof}
	Let 
	\[
	P = \left (\begin{array} {cc} A & B \\ C & D \end{array} \right ).
	\]
		By lemma \ref{OGHYP3}, $\nu(A) \equiv 0 \mod 2$.
	Hence by lemma \ref{OGHYP4}, $A + B X$ is nonsingular for some alternating matrix $X$.  
	Then $P$ is conjugate to
	\[
	\widetilde{P} = \left (\begin{array} {cc}  \Idm_n & 0 \\ -X & \Idm_n \end{array} \right )
	\left (\begin{array} {cc} A & B \\ C & D \end{array} \right )
	\left (\begin{array} {cc}  \Idm_n & 0 \\ X & \Idm_n \end{array} \right )
	= \left (\begin{array} {cc} A + B X & \ast \\ \ast & \ast\end{array} \right )
	\]
\end{proof}

\begin{lemma}                                \label{OGHYP6}
	Let $n \ge 3$. Let $\delta \in K^*$. Let $H \in \M(n,K)$ be alternating and $H \ne 0$.
	There exist  alternating matrices $X, Z \in \M(n,K)$ such that 
	\begin{enumerate}
		\item  $\det (\Idm_n + HZ) = \det (\Idm_n + HZ) = \delta^2$ and
		\item $\rank  H(X -  Z) \in \{1, 2\}$.
	\end{enumerate}
\end{lemma}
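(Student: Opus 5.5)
Read item (1) as $\det(\Idm_n+HX)=\det(\Idm_n+HZ)=\delta^2$; the stated version repeats $Z$, which is presumably a typo. The plan is to normalise $H$ by congruence and then write $X$ and $Z$ down explicitly.

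\emph{Reduction.} Replace $H$ by $RHR'$ and $X,Z$ by $R^{+}XR^{-1}$, $R^+ZR^{-1}$ for some $R\in\GL(n,K)$. Alternating matrices stay alternating. Since $R'R^+=\Idm_n$, we get
\[
\Idm_n+RHR'R^+XR^{-1}=R(\Idm_n+HX)R^{-1},
\]
so the determinant is unchanged. Likewise $H(X-Z)$ becomes $RH(X-Z)R^{-1}$, so its rank is unchanged. Hence we may assume $H=J_{2m}\oplus 0_{n-2m}$ with $m\ge 1$, where $J_{2m}=\Idm_m\otimes J$ and
\[
J=\left(\begin{array}{cc}0&1\\-1&0\end{array}\right).
\]
The key identity is that, for $X$ partitioned conformally with $H$ with upper-left block $X_1$ of size $2m$, $HX$ has zero lower block rows. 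Therefore
\[
\det(\Idm_n+HX)=\det(\Idm_{2m}+J_{2m}X_1)=\det(J_{2m})\det(J_{2m}^{-1}+X_1)=\mathrm{Pf}(X_1-J_{2m})^2 ,
\]
using $J_{2m}^{-1}=-J_{2m}$ and $\det J_{2m}=1$. The last equality holds because $X_1-J_{2m}$ is alternating, so its determinant is the square of its Pfaffian.

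\emph{Choice of $X$.} Let $x=1-\delta$ and take $X_1=xJ\oplus 0_{2m-2}$, with all other blocks of $X$ zero. Then $\Idm_{2m}+J_{2m}X_1=(1-x)\Idm_2\oplus\Idm_{2m-2}$, because $J^2=-\Idm_2$. Hence $\det(\Idm_n+HX)=(1-x)^2=\delta^2$.

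\emph{Choice of $Z$, case $n>2m$ (in particular $m=1$, where $n\ge3$ is used).} Let $Z=X+(e_1e_{2m+1}'-e_{2m+1}e_1')$. This matrix is alternating and has the same upper-left block $X_1$, so $\det(\Idm_n+HZ)=\delta^2$. Moreover $H(X-Z)=-He_1e_{2m+1}'+He_{2m+1}e_1'=-He_1e_{2m+1}'$, since $He_{2m+1}=0$. This has rank $1$ because $He_1\neq0$.

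\emph{Choice of $Z$, case $n=2m$ (so $m\ge2$).} Let $Z=X+t(e_1e_3'-e_3e_1')$ with $t\ne0$. Now $H(X-Z)=-tH(e_1e_3'-e_3e_1')$ has rank $2$, since $H$ is invertible. It remains to check that the Pfaffian is unchanged. The matrix $Z_1-J_{2m}$ is block diagonal: a $4\times4$ block $M$, followed by copies of $-J$, each of which has Pfaffian $-1$. The block $M$ has entries $m_{12}=x-1$, $m_{34}=-1$, $m_{13}=t$, and $m_{14}=m_{23}=m_{24}=0$. Hence
\[
\mathrm{Pf}(M)=m_{12}m_{34}-m_{13}m_{24}+m_{14}m_{23}=1-x ,
\]
which is independent of $t$. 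So $\mathrm{Pf}(Z_1-J_{2m})=\pm(1-x)$ and $\det(\Idm_n+HZ)=\delta^2$, as required.

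The main obstacle is this last case, where no zero block of $H$ is available to absorb the perturbation. The Pfaffian computation is exactly the check that handles it. Everything else is bookkeeping under the congruence reduction.
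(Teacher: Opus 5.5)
Your proof is correct and follows essentially the paper's route: normalise $H$ by congruence, take the same $X=(1-\delta)J\oplus 0$, and let $Z$ be $X$ plus $e_1e_3'-e_3e_1'$ (or $e_1e_{2m+1}'-e_{2m+1}e_1'$), which is exactly the paper's pair of matrices in its $\rank H=2$ and $\rank H\ge 4$ cases. The only differences are cosmetic: you split cases by whether $H$ is singular rather than by its rank, and you verify the determinants with the identity $\det(\Idm_n+HX)=\mathrm{Pf}(X_1-J_{2m})^2$ instead of computing $HX$ and $HZ$ directly.
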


\begin{proof} 
	
	First let $\rank H = 2$. Then we may assume that $n = 3$ and
	\[
	H  = \left (\begin{array} {ccc} 0 & 1 & 0 \\ -1 & 0 & 0  \\  0 & 0 & 0  \end{array} \right ).
	\]
	
	\[
	X  = \left (\begin{array} {ccc} 0 & 1 -\delta & 0 \\ \delta -1 & 0 & 0  \\  0 & 0 & 0  \end{array} \right ),
	Z  = \left (\begin{array} {ccc} 0 & 1 -\delta & 1 \\ \delta -1 & 0 & 0  \\  -1 & 0 & 0  \end{array} \right ).
	\]
		Then
	\[
	HX  = \left (\begin{array} {ccc} \delta -1  & 0  & 0 \\ 0 & \delta -1 & 0  \\  0 & 0 & 0  \end{array} \right ),
	HZ  = \left (\begin{array} {ccc} \delta -1  & 0  & 0 \\ 0 & \delta -1 & -1  \\  0 & 0 & 0  \end{array} \right ).
	\]

	So let $\rank H \ge 4$. Then we may assume that $n = 4$ and	
	\[
	H  = \left (\begin{array} {cccc} 0 & 1 & 0 & 0\\ -1 & 0 & 0  & 0 \\  0 & 0 & 0  & 1  \\ 0 & 0 & -1  & 0  \end{array} \right ).
	\]
		Let
	\[
	X  = \left (\begin{array} {cccc} 0 & 1-\delta & 0 & 0\\ \delta -1 & 0 & 0  & 0 \\                            0 & 0 & 0  & 0  \\ 0 & 0 & 0  & 0  \end{array} \right ),
	Z  = \left (\begin{array} {cccc} 0 & 1-\delta & 1 & 0\\ \delta -1 & 0 & 0  & 0 \\                            -1 & 0 & 0  & 0  \\ 0 & 0 & 0  & 0  \end{array} \right ).
	\]
	Then
	\[
	HX  = \left (\begin{array} {cccc} \delta - 1& 0 & 0 & 0\\ 0 & \delta -1 & 0  & 0 \\                            0 & 0 & 0  & 0  \\ 0 & 0 & 0  & 0  \end{array} \right ),
	HZ   = \left (\begin{array} {cccc} \delta - 1 & 0 & 0 & 0\\ 0 & \delta -1 & 0  & -1 \\                         0  & 0 & 0  & 0  \\ 1 & 0 & 0  & 0  \end{array} \right ).
	\]
\end{proof}

\begin{lemma}                                  \label{OGHYP7}
	Let $n \ge 2$. Let $A \in \GL(n,K)$.  
	 If $AZ = ZA^+$ for all alternating  $Z \in \M(n,K)$, then $A = \pm \Idm_n$ or $n  = 2$
	 and $\det A = 1$.
\end{lemma}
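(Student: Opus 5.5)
We will test the hypothesis $AZ = ZA^+$, i.e. $AZA' = Z$, on a small family of alternating matrices. Since every alternating $Z$ has zero diagonal, we use the elementary alternating matrices $Z_{ij} = e_{ij} - e_{ji}$ for $i<j$, together with the identity $AZ_{ij}A' = a_i a_j' - a_j a_i'$, where $a_i$ denotes the $i$-th column of $A$. The hypothesis then reads $a_i \wedge a_j = e_i \wedge e_j$ for all $i<j$, viewed as an equality in $\Lambda^2(K^n)$. Here we identify alternating matrices with bivectors; this identification is valid in every characteristic, because by the paper's definition an alternating matrix has zero diagonal.

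The case $n = 2$ is a direct check. The only alternating matrices are the multiples of $Z_{12}$, and $AZ_{12}A' = \det(A)\, Z_{12}$. So the hypothesis holds exactly when $\det A = 1$, which is the stated exception.

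Now let $n \ge 3$. We first show that every column $a_i$ is a scalar multiple of $e_i$. For a nonzero decomposable bivector $a_i \wedge a_j$, the $2$-plane $\langle a_i, a_j\rangle$ is determined by the bivector, so $\langle a_i, a_j\rangle = \langle e_i, e_j\rangle$ for all $i \ne j$. Since $n \ge 3$, choose $j, k$ distinct from $i$; then
\[
a_i \in \langle e_i, e_j\rangle \cap \langle e_i, e_k\rangle = \langle e_i\rangle .
\]
Write $a_i = \lambda_i e_i$. The equation $a_i \wedge a_j = e_i \wedge e_j$ then gives $\lambda_i \lambda_j = 1$ for all $i \ne j$. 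Taking three distinct indices $i, j, k$, we get $\lambda_i \lambda_j = \lambda_i \lambda_k = \lambda_j \lambda_k = 1$. Hence $\lambda_j = \lambda_k$, all the $\lambda$'s are equal to a common value $\lambda$, and $\lambda^2 = 1$. Therefore $A = \pm \Idm_n$.

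The only step needing care is that the plane spanned by $a_i, a_j$ is recovered from $a_i \wedge a_j$. This can be avoided by a coordinate argument. The $(k,l)$ entry of $a_i a_j' - a_j a_i'$ is the $2\times 2$ minor of $A$ in rows $k,l$ and columns $i,j$. The hypothesis therefore says that, for all $i<j$, every $2\times 2$ minor of $A$ in columns $\{i,j\}$ vanishes except the one in rows $\{i,j\}$, which equals $1$. For $k \notin \{i,j\}$, the minors in rows $\{i,k\}$ and $\{j,k\}$ vanish, while the minor in rows $\{i,j\}$ is nonzero. By Cramer's rule, row $k$ restricted to columns $i,j$ is then a linear combination of rows $i$ and $j$ with zero coefficients, so it is zero. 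Hence $a_{ki} = 0$ for every $k \ne i$, provided we can choose a third index $j \ne i,k$, which is possible because $n \ge 3$. So $A$ is diagonal, and we finish as above. This argument is characteristic-free, so no case distinction on $\ch K$ is needed.
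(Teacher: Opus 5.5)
Your proof is correct, but it takes a different route from the paper's.

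The paper argues by contradiction. It relies, implicitly, on the fact that the hypothesis $AZA' = Z$ for all alternating $Z$ is invariant under $A \mapsto XAX^{-1}$, since $X^{-1}ZX^{+}$ is again alternating. For $n = 2$ it puts a nonscalar $A$ into companion form $\left(\begin{smallmatrix} 0 & 1\\ \gamma & \delta\end{smallmatrix}\right)$ and computes $\gamma = -1$. For $n \ge 3$ it conjugates a nonscalar $A$ so that $\PC_2(A)$ is singular. A single test matrix $Z = H \oplus 0_{n-2}$ then suffices: $\PC_2(AZA') = \PC_2(A)H\PC_2(A)'$ is singular, while $\PC_2(Z) = H$ is not.

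You instead test $A$ against all elementary alternating matrices $Z_{ij}$ in the given basis. From the $2\times 2$ minors (or the planes $\langle a_i, a_j\rangle$) you read off directly that $A$ is diagonal, and then that $A = \pm\Idm_n$.

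The paper's argument is shorter, since one test matrix does the work. However, it leaves three things unstated: the conjugation invariance, the existence of a conjugate with singular $2\times 2$ corner, and the final step from ``scalar'' to $\pm\Idm_n$.

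Your argument needs no normal forms and is visibly characteristic-free. For $n = 2$ it also gives the exact characterization $AZ_{12}A' = \det(A)\,Z_{12}$, whereas the paper's companion-matrix calculation covers only nonscalar $A$.
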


\begin{proof}
	Suppose that $A$ is not scalar. 
	Let $n  = 2$. If
	\[
	 \left (\begin{array} {cc} 0 & 1\\ \gamma & \delta  \end{array} \right )
	 \left (\begin{array} {cc} 0 & 1\\ -1 & 0  \end{array} \right )
	 = \left (\begin{array} {cc} 0 & 1\\ -1 & 0  \end{array} \right )
	 \left (\begin{array} {cc} -\gamma^{-1} \delta & 1\\ \gamma^{-1} & 0  \end{array} \right ),
	\]
	then $\gamma =-1$.
	So let  $n \ge 3$. We may additionally assume that $\PC_2(A)$ is singular. Put $Z = H \oplus 0_{n-2}$, where
	$H$ is regular and  alternating. Then  $\PC_2(AZA') = \PC_2(A) H \PC_2(A’)$ is singular, hence $AZA' \ne Z$.
\end{proof}

\begin{lemma}                               \label{OGHYP8}
	Let $P \in \pOG(2n,K)$ be non-scalar. Assume further that $\CC (P') = 0$. Then $P$ is $\Gamma_{\mathrm{U}}$-conjugate to $\widetilde{P}$
	with $\CC(\widetilde{P}) \ne 0$ except when $n = 2$ and $\PC(P) \in \SL(2,K)$.
\end{lemma}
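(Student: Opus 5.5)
The plan is to conjugate by the simplest elements of $\Gamma_{\mathrm{U}}$, the block unipotent matrices $U_Z = \left(\begin{array}{cc} \Idm_n & Z \\ 0 & \Idm_n\end{array}\right)$ with $Z$ alternating, and to read off the new congruence corner explicitly.

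First, I unpack the hypothesis. Write $P = \left(\begin{array}{cc} A & B \\ C & D\end{array}\right)$. The condition $\CC(P') = 0$ says $C' = 0$, i.e. $C = 0$. The relation $AD' + BC' = \Idm_n$ then becomes $AD' = \Idm_n$. Hence $A$ is nonsingular and $D = A^+$, so
\[
P = \left(\begin{array}{cc} A & B \\ 0 & A^+\end{array}\right).
\]
If $B \ne 0$ there is nothing to prove, since $\widetilde{P} = P$ works. So assume from now on that $B = 0$, i.e. $P = A \oplus A^+$.

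Next, I check that $U_Z$ is admissible. For $Z$ alternating, $U_Z \in \pOG(2n,K)$: its corners are $\Idm_n, Z, 0, \Idm_n$, so $\Idm_n Z' = Z'$ is alternating, $0\cdot \Idm_n' = 0$ is alternating, and $\Idm_n \Idm_n' + Z\cdot 0 = \Idm_n$. Its transpose has zero congruence corner, so $U_Z \in \Gamma_{\mathrm{U}}$, and $U_Z^{-1} = U_{-Z}$. A direct block multiplication gives
\[
U_Z P U_Z^{-1} = \left(\begin{array}{cc} A & B + ZA^+ - AZ \\ 0 & A^+\end{array}\right).
\]
With $B = 0$, the congruence corner of $\widetilde{P} := U_Z P U_Z^{-1}$ is therefore $ZA^+ - AZ$.

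It remains to find an alternating $Z$ with $AZ \ne ZA^+$, and this is exactly where Lemma \ref{OGHYP7} enters. Suppose instead that $AZ = ZA^+$ for every alternating $Z$. Lemma \ref{OGHYP7} then gives $A = \pm\Idm_n$, or $n = 2$ and $\det A = 1$.
\begin{enumerate}
\item If $A = \pm \Idm_n$, then $A^+ = \pm\Idm_n$ and $P = \pm \Idm_{2n}$ is scalar, which the hypothesis excludes.
\item If $n = 2$ and $\det A = 1$, then $\PC(P) = A \in \SL(2,K)$, which is precisely the stated exception.
\end{enumerate}
Outside the exception some alternating $Z$ therefore has $AZ \ne ZA^+$, and the corresponding $\widetilde{P}$ has $\CC(\widetilde{P}) \ne 0$.

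I do not expect a serious obstacle: the only nontrivial input is Lemma \ref{OGHYP7}. The step to handle carefully is the sign and ordering in the conjugation formula, so that the new corner is $ZA^+ - AZ$ and not some other combination.
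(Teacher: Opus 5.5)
Your proof is correct and is the paper's argument. You reduce to $P = A \oplus A^+$, conjugate by the block unipotent element $\left(\begin{array}{cc} \Idm_n & Z \\ 0 & \Idm_n \end{array}\right) \in \Gamma_{\mathrm{U}}$ with $Z$ alternating, and use Lemma \ref{OGHYP7} to choose $Z$ with $AZ \ne ZA^+$; your corner $ZA^+ - AZ$ is the correct version of the paper's $AS - SA'$, which has a typo and should read $AS - SA^+$.

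As in the paper, you tacitly need $n \ge 2$ to apply Lemma \ref{OGHYP7}. The statement omits this hypothesis, and for $n = 1$ with $|K| > 3$ the claim can actually fail.
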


\begin{proof}
	We may assume that $\CC (P) = 0$. Hence
	\[
	P =   \left (\begin{array} {cc} A & 0\\ 0 &  A^+ \end{array} \right ).
	\]
	By the preceeding lemma there exists an alternating matrix $S \in \M(n,K)$   such that $AS -SA \ne 0$. Then
	\[
	\left (\begin{array} {cc} \Idm_n & -S\\ 0 & \Idm_n \end{array} \right )
	\left (\begin{array} {cc} A & 0\\ 0 & A' \end{array} \right )
	\left (\begin{array} {cc} \Idm_n & S\\ 0 & \Idm_n \end{array} \right )
	= \left (\begin{array} {cc} A & AS -SA'\\ 0 & A^+ \end{array} \right )
	\]
	has a nontrivial congruence corner.
\end{proof} 

\begin{lemma}                                          \label{OGHYP9}
	Let $n \ge 3$. Let $\delta \in K^*$.
	Let $P \in \pSOG(2n,K)$ be non-scalar.  
	Then $P$ is conjugate to a matrix $\widetilde{P}$ such that $\PC(\widetilde{P})$ is nonscalar and
	$\det \PC(\widetilde{P}) = \delta^2 \det \PC(P)$.
\end{lemma}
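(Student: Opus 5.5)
We first use Lemma \ref{OGHYP5} to replace $P$ by a conjugate with $A := \PC(P)$ nonsingular. By Lemma \ref{OGHYP1} this conjugate has the form $P = LQU$, with
\[
L = \left (\begin{array} {cc} \Idm_n & 0 \\ CA^{-1} & \Idm_n \end{array} \right ),\quad
Q = \left (\begin{array} {cc} A & 0 \\ 0 & A^+ \end{array} \right ),\quad
U = \left (\begin{array} {cc} \Idm_n & A^{-1}B \\ 0 & \Idm_n \end{array} \right ).
\]
The upper right block $H := A^{-1}B$ is alternating, because $AB'$ is alternating and so $A^{-1}B = A^{-1}(AB')A^+$. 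The plan is then to conjugate by lower unitriangular matrices
\[
N_X = \left (\begin{array} {cc} \Idm_n & 0 \\ X & \Idm_n \end{array} \right ), \quad X \text{ alternating}.
\]
A direct computation gives
\[
\PC(N_X^{-1} P N_X) = A + BX = A(\Idm_n + HX).
\]
Hence $\det \PC(N_X^{-1} P N_X) = \det A \cdot \det(\Idm_n + HX)$.

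If $H \ne 0$, apply Lemma \ref{OGHYP6}. It supplies two alternating matrices $X, Z$ with
\[
\det(\Idm_n + HX) = \det(\Idm_n + HZ) = \delta^2
\]
(the statement of Lemma \ref{OGHYP6} repeats $HZ$, but it clearly intends both) and $\rank H(X-Z) \in \{1,2\}$. Both conjugates then have principal corner of determinant $\delta^2 \det A$. If both corners $A(\Idm_n + HX)$ and $A(\Idm_n + HZ)$ were scalar, their difference $AH(X-Z)$ would be a difference of two scalar matrices, hence itself scalar. But it has rank $1$ or $2$, and $n \ge 3$, so it cannot be scalar. Therefore at least one of the two conjugates works.

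If $H = 0$, then $B = 0$. Since the lower left corner $C$ need not vanish, we first transpose the roles. If $C \ne 0$, we conjugate by upper unitriangular matrices
\[
\left (\begin{array} {cc} \Idm_n & Y \\ 0 & \Idm_n \end{array} \right ), \quad Y \text{ alternating}.
\]
The principal corner becomes $A + YC$ up to the analogous factorization $(\Idm_n + YCA^{-1})A$, and $CA^{-1}$ is alternating because $A'C$ is. The same argument with Lemma \ref{OGHYP6} applies, using $\det(\Idm_n + M) = \det(\Idm_n + M')$ to handle the ordering of the product. If both $B = 0$ and $C = 0$, then $P = Q$ is block diagonal and non-scalar, so $A \ne \pm \Idm_n$. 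Lemma \ref{OGHYP8} (via Lemma \ref{OGHYP7}, with $n \ge 3$) gives a $\Gamma_{\mathrm{U}}$-conjugate with nonzero congruence corner. That conjugation is by a block upper unitriangular matrix, so its principal corner is still $A$, which is nonsingular. We are then back in the case $H \ne 0$ with the same $\det \PC$.

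The main obstacle is keeping track of $\det \PC$ through these reductions, in particular the initial conjugation in Lemma \ref{OGHYP5}, which changes $\PC(P)$. To handle it, we will not track the determinant through the reductions at all. By Lemma \ref{OGHYP1}, $\det \PC$ equals $\Theta(P)$ modulo squares, and $\Theta$ is a conjugacy invariant. So the target $\delta^2 \det \PC(P)$ is reached by replacing $\delta$ with $\delta\lambda$ for a suitable $\lambda$ that absorbs the square-class change $\det A = \lambda^2 \det \PC(P)$ produced in the preliminary steps. The case where the original $\PC(P)$ is singular needs separate care; there we interpret the statement as applying to nonsingular $\PC(P)$.
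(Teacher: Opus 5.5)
Your proof is correct and is essentially the paper's. Both arguments pass to a conjugate with nonsingular $\PC$ via Lemma \ref{OGHYP5}, arrange a nonzero off-diagonal corner (using Lemma \ref{OGHYP8} when $B=C=0$), apply Lemma \ref{OGHYP6} to $A^{-1}B$, and note that $A+BX$ and $A+BZ$ differ by a matrix of rank $1$ or $2$. The one variation is the case $B=0\neq C$: the paper conjugates by $G$, while you use upper unitriangular conjugation.

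The paper also tacitly measures $\det\PC$ against the conjugate with nonsingular corner, so your caveat for singular $\PC(P)$ matches what it does. Only cosmetic slips remain: $A^{-1}(AB')A^{+}=(A^{-1}B)'$ rather than $A^{-1}B$ (still alternating), and with $\det A=\lambda^2\det\PC(P)$ you should replace $\delta$ by $\delta\lambda^{-1}$, not $\delta\lambda$.
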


\begin{proof}
	By lemma \ref{OGHYP5}, we may assume that $\PC(P)$ is nonsingular. Let 
	\[
	P = \left (\begin{array} {cc} A & B \\ C & D \end{array} \right ).
	\]
	By lemma \ref{OGHYP8}, we may assume that $B \ne 0$ (If $C \ne 0$ consider $P^G$).
	By lemma \ref{OGHYP6}, there exist  alternating matrices $X, Z \in \M(n,K)$ such that 
	\begin{enumerate}
		\item  $\det (\Idm_n + A^{-1}BZ) = \det (\Idm_n + A^{-1}BZ) = \delta^2$ and
		\item $\rank  A^{-1}B(X -  Z) \in \{1, 2\}$.
	\end{enumerate}
Clearly $A + BX$ and $A + BZ$ cannot be both scalar as $BX -BZ$ is nonscalar.

 So let $A + BX$ be nonscalar. Now
	\[
	\left (\begin{array} {cc} \Idm_n & 0 \\ -X & \Idm_n \end{array} \right )
	 \left (\begin{array} {cc} A & B \\ C & D \end{array} \right )
	\left (\begin{array} {cc} \Idm_n & 0 \\ X & \Idm_n \end{array} \right )
	= \left (\begin{array} {cc} A + B X & \ast \\ \ast & \ast\end{array} \right ).
	\]
\end{proof}

\begin{proof}[Proof of theorem \ref{theorem-1}]
We assume that $\Psi_1, \Psi_2 \subseteq \pOmega(2n,K)$.
	By Witt's cancellation theorem, the fix space of  an element of $\Psi_1$ or 
	$\Psi_2$ is a hyperbolic plane. Hence   $\Psi_1$ and $\Psi_2$ are even conjugacy classes of $\pOG(2n,K)$. Let $P \in \pOmega(2n,K)$ be nonscalar.
		Let $\mu (\Psi_1) = (x-1)q_1 q_1^*, \mu (\Psi_2) = (x-1)q_2 q_2^*$, where $\gcd (q_1, q_1^*) = \gcd (q_2, q_2^*) =1$. 
	We have shown above that $P$ is $\Gamma_{\mathrm{L}}$ conjugate to a matrix 
	\[
	Q =   \left (\begin{array} {cc} A & B\\ C &  D \end{array} \right ),
	\]
	where $A$ is nonscalar, $\det A = q_1(0)q_2(0)$.

	By \cite[Theorem 1.2]{BN-1999}, $A = R S$ is the product of cyclic matrices $R$ and $S$ with
	$\mu (R) = (x-1)q_1, \mu (R) = (x-1)q_2$. By \ref{OGHYP1}, $\SC(Q) = R^+S^+$. Then $Q = Q_1 Q_2$, where
	\[
	Q_1 = \left (\begin{array} {cc} R & 0\\ CS^{-1} &  R^+ \end{array} \right ),
	Q_2 = \left (\begin{array} {cc} S & R^{-1}B\\ 0 &  S^+ \end{array} \right ).
	\]
	By \ref{OGHYP1}, factors have zero Dickson invariant.  Hence by \ref{OGHYP9}, $Q_1 \in \Psi_1,  Q_2 \in \Psi_2$. 
\end{proof}

\section{Proof of corollary \ref{cor-1}}

\begin{lemma}                               \label{OGHYP10}
	Let  $n \ge 2$. If $|K|\le 3$ let $n \ge 4$.
	If $K = \GF(5)$ let $n \ge 3$.
	There exists a hyperbolic conjugacy class 
	$\Psi$ of  $\pOmega(2n,K)$ with $\Psi = \Psi^{-1}$ such that $\Psi^2$
	contains all nonscalar transformations of $\pOmega(2n,K)$.
\end{lemma}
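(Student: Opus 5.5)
We apply Theorem \ref{theorem-1} with $\Psi_1 = \Psi_2 = \Psi$. It therefore suffices to construct a conjugacy class $\Psi = \Idm_2 \perp \Phi$ of $\pOmega(2n,K)$ with three properties: $\Phi$ is strictly hyperbolic and cyclic on a hyperbolic space of dimension $2n-2$, $\Psi$ lies in $\pOmega(2n,K)$, and $\Psi = \Psi^{-1}$. The theorem then gives that $\Psi^2 = \Psi\Psi$ contains every nonscalar element of $\pOmega(2n,K)$.

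\textbf{Construction.} Write $m = n-1$. We look for a monic polynomial $q \in K[x]$ of degree $m$ satisfying:
\begin{enumerate}
\item $q(0) \ne 0$ and $\gcd(q, q^*) = 1$, so that $(x-1) \nmid q q^*$;
\item $q^* = \lambda q$ up to a scalar is excluded, while the companion matrix $C_q$ is conjugate to its own inverse up to the swap, i.e.\ $\{q, q^*\}$ is stable under $x \mapsto x^{-1}$;
\item $q(0)$ is a square in $K^*$ (or lies in the appropriate class).
\end{enumerate}
We then take $\Phi = \operatorname{diag}(C_q, C_q^+)$ acting on $U \oplus W$ with $U, W$ totally isotropic. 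This $\Phi$ is cyclic, since its minimal polynomial $q q^*$ has degree $2m$, and it is strictly hyperbolic. Its inverse has minimal polynomial $q^* q$, and cyclic hyperbolic classes are determined by that polynomial. Hence $\Phi^{-1}$ is conjugate to $\Phi$ in $\OG$, and in fact by an element swapping $U$ and $W$, which lies in $\SOG$ exactly when $m$ is even. If necessary we correct this by the $\Idm_2$ part, using a reflection pair on the hyperbolic plane, so that conjugacy holds inside $\pOmega$-classes. Because $x-1$ does not divide the minimal polynomial, the $\OG$-class splits in $\pOmega$ into only boundedly many classes, and we must check that $\Psi^{-1}$ lands in the same one. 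The cleanest way is to realise the conjugating element $\operatorname{diag}(Y,Y^+)$ with $Y C_q Y^{-1} = C_q$-type shapes and spinor norm $\det Y$, adjusting $\det Y$ through the centraliser, which is $K[C_q]^*$ and contains elements of every norm.

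\textbf{Membership in $\pOmega$.} By Lemma \ref{OGHYP1} applied to $\operatorname{diag}(\Idm_1 \oplus C_q, \ldots)$, the element has Dickson invariant $0$ and spinor norm $\det C_q = \pm q(0)$. Choosing $q(0) = (-1)^m$ makes the spinor norm $1$, and Lemma \ref{SPINOR2} then gives membership.

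\textbf{Main obstacle.} The difficulty is the existence of $q$ with $\gcd(q,q^*)=1$ and a prescribed constant term over very small fields. It requires a root-free factorisation avoiding self-reciprocal factors. Over $\GF(2)$, $\GF(3)$ and $\GF(5)$ the low-degree polynomials are too few, which explains the hypotheses $n \ge 4$ and $n \ge 3$. For large $K$, take $q = (x-a)^m$ with $a \ne \pm 1$ and $a^m = (-1)^m$, or more simply $q = x^m - c$ with $c \neq \pm 1$ chosen appropriately. For the small fields we exhibit explicit irreducible non-self-reciprocal polynomials; for example, over $\GF(2)$ in degree $3$ we use $x^3+x+1$, whose reciprocal is $x^3+x^2+1$. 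For larger $m$ we combine such polynomials with products of these, checking the gcd condition directly.
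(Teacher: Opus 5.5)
Your strategy is the paper's: take $\Psi$ to be the class of $\operatorname{diag}(W,W^+)$ with $W=\Idm_1\oplus C_q$, get Dickson invariant $0$ and spinor norm $\det W=(-1)^m q(0)$ from Lemma~\ref{OGHYP1}, and apply Theorem~\ref{theorem-1} with $\Psi_1=\Psi_2=\Psi$. The paper's proof is exactly this plus an explicit choice of $q$. That choice is what you do not supply correctly: you need a monic $q$ of degree $m=n-1$ with $\gcd(q,q^*)=1$ and $(-1)^m q(0)\in (K^*)^2$.

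\textbf{General fields.} Your choice $q=(x-a)^m$ with $a^m=(-1)^m$ fails in two ways. Its spinor norm is $a^m=(-1)^m$, which need not be a square (e.g.\ $\GF(7)$, $m=3$, $a=3$). Also, such $a\ne\pm1$ need not exist: over $\mathbb{Q}$ the condition forces $(-a)^m=1$, hence $a=\pm1$. Insisting on $q(0)=(-1)^m$ exactly also rules out $x^m-c$ with $c\neq\pm1$. You must work modulo squares. The paper takes $q=(x-\lambda^2)^m$ with $\lambda^2\notin\{0,\pm1\}$; equivalently, take $x^m-c$ with $c=(-1)^{m+1}\lambda^2$. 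This is possible precisely when $|K^2|\ge 4$, i.e.\ $K\notin\{\GF(2),\GF(3),\GF(5)\}$.

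\textbf{The three small fields.} ``Combine and check the gcd'' ignores the spinor-norm condition, which is the real constraint. Over $\GF(5)$ you need $q(0)=\pm1$. Over $\GF(3)$ you need $(-1)^m q(0)=1$, and every quadratic with $q(0)=1$ is self-reciprocal. You must exhibit a $q$ for every admissible $m$, for instance:
\begin{enumerate}
\item over $\GF(5)$: $(x^2+x-1)^t$ for $m=2t$, and $(x^2+x-1)^t(x^3+x-1)$ for $m=2t+3$;
\item over $\GF(3)$: $(x^3+x^2-1)^t$ for $m=3t$, $(x^3+x^2-1)^t(x^2+x-1)^2$ for $m=3t+4$, and $(x^3+x^2-1)^t(x^3-x+1)(x^2+x-1)$ for $m=3t+5$;
\item over $\GF(2)$: $(x^3+x+1)^t$ times $1$, $x^4+x+1$ or $x^5+x^2+1$, for $m=3t$, $3t+4$, $3t+5$.
\end{enumerate}
The quartic over $\GF(2)$ must not be self-reciprocal; the paper's printed $x^4+x^2+1=(x^2+x+1)^2$ is self-reciprocal and must be replaced.

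\textbf{The equality $\Psi=\Psi^{-1}$.} Your argument contains false steps. First, $x-1$ does divide the minimal polynomial of $\Psi$. Second, $K[C_q]^*$ need not contain elements of every determinant class: for $q=(x-a)^m$ all its determinants are $m$-th powers. What works is the remark you make only in passing, which is also what lies behind the paper's assertion that $\Psi$ is a full $\pOG(2n,K)$-class:
\begin{enumerate}
\item Let $H$ be the fixed hyperbolic plane. Then $\OG(H)\perp\Idm$ centralizes $\Psi$.
\item It contains a reflection and elements $\operatorname{diag}(a,a^{-1})$ of every spinor norm.
\item So by Lemma~\ref{SPINOR2}, $\pOG(2n,K)=\pOmega(2n,K)\,(\OG(H)\perp\Idm)$, and the $\pOG$-class of $\Idm_2\perp\Phi$ does not split in $\pOmega(2n,K)$.
\item $\Phi^{-1}$ is cyclic and strictly hyperbolic with the same minimal polynomial $qq^*$, so it is $\pOG$-conjugate to $\Phi$.
\end{enumerate}
Hence $\Psi^{-1}=\Psi$.
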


\begin{proof}
	 Let $\Psi$ the $\pOmega(2n,K)$-conjugacy class of 
	\[
	 \left (\begin{array} {cc} W & 0\\ 0 &  W^+ \end{array} \right ),
	\]
	where $W \in \GL(n, K)$ is cyclic with minimal polynomial
	\begin{enumerate}
		\item $(x-1)(x-\lambda^2)^t$ if $|K^2| \ge 4$ and  $\lambda^2 \ne 0, \pm 1$, 
		\item $(x-1)(x^2+ x -1)^t$ if $K = \GF(5)$ and $n = 2t+1 \ge 3$,
		\item $(x-1)(x^2+ x -1)^t (x^3 + x -1)$ if $K = \GF(5)$ and $n = 2t+4 \ge 4$,
		\item $(x-1)(x^3 + x^2 -1)^t$ if $K = \GF(3)$ and $n = 3t+1 \ge 4$,
		\item $(x-1)(x^3 + x^2 -1)^t (x^2+ x -1)^2$ if $K = \GF(3)$ and $n = 3t+5 \ge 5$,
		\item $(x-1)(x^3 + x^2 -1)^t (x^3 -x +1)^2$ if $K = \GF(3)$ and $n = 3t+5 \ge 6$,
		\item $(x-1)(x^3 + x +1)^t$ if $K = \GF(2)$ and $n = 3t+1 \ge 4$,
		\item $(x-1)(x^3 + x +1)^t (x^4 + x^2 +1) $ if $K = \GF(2)$ and $n = 3t+5 \ge 5$,
		\item $(x-1)(x^3 + x +1)^t (x^5 + x^2 +1) $ if $K = \GF(2)$ and $n = 3t+6 \ge 6$.
	\end{enumerate}
	Then $\Psi$ is an $\pOG(2n,K)$-conjugacy class so that $\Psi = \Psi^{-1}$.
	By theorem \ref{theorem-1}, $\Psi^2$ contains all nonscalar elements of 
	$\pOmega(2n,K)$.
\end{proof}

\begin{remark} 
	There exists a unique noncentral class $\Sigma$ of involutions in $\Omega(4,5)$. 
	It is easy to see that $-\Idm_4, \Idm_4, \Sigma \subseteq \Sigma^2$. By \cite[Theorem 8.5]{KT-1998}, $\Omega(4,5)$ is bireflectional. Hence $\Omega(4,5) \subseteq \Sigma^2$. 
\end{remark}

Using the computer algebra GAP \cite{GAP2026}, we can show

\begin{remark}                               \label{OGHYP11}
	Let $q \in \{2, 3, 5\}$.
	There exist exactly 2  conjugacy classes $\Psi, \Phi$ of $\Omega(6,q)$
	with minimal polynomial $(x^3 + x^2 -1) (x^3 -x -1)$. We have $\Phi =  \Psi^{-1}$ and
\begin{enumerate}
\item 	$\Omega(6,q) = \Psi \Psi^{-1} = \Psi^2  \cup \Idm_6$ if $q \in \{2, 3\}$,
\item 	$\Omega(6,q) =  \Psi \Psi^{-1} \cup -\Idm_6 = \Psi^2  \cup \Idm_6 \cup -\Idm_6$
if $q = 5$.
\end{enumerate}
\end{remark}

\begin{remark}                               \label{OGHYP12}
	Let $q \in \{3, 5\}$.
	There exist a unique   conjugacy class $\Psi$ of $\Omega(6,q)$
	with minimal polynomial $(x^2 + x -1) (x^2 -x -1) (x + 1)$. We have 
	\begin{enumerate}
		\item 	$\Omega(6,q)  = \Psi^2$ if $q = 3$,
		\item 	$\Omega(6,q)  = \Psi^2  \cup -\Idm_6$ if $q = 5$.
	\end{enumerate}
\end{remark}


\ifdraft{\listoflabels}{}

\end{document}

\typeout{get arXiv to do 4 passes: Label(s) may have changed. Rerun}